\newcounter{minutes}\setcounter{minutes}{\time}
\newcounter{hours}\setcounter{hours}{\time}
\newtheorem{theorem}[equation]{Theorem}
\newtheorem{lemma}[equation]{Lemma}
\newtheorem{proposition}[equation]{Proposition}
\newtheorem{example}[equation]{Example}
\newtheorem{corollary}[equation]{Corollary}
\newtheorem{remark}[equation]{Remark}
\newtheorem{definition}[equation]{Definition}
\numberwithin{equation}{section}
 \DeclareMathOperator{\card}{card}
\DeclareMathOperator{\diam}{diam}
\begin{document}

\def\thefootnote{}
\footnotetext{ \texttt{\tiny File:~\jobname .tex,
          printed: \number\year-\number\month-\number\day,
          \thehours.\ifnum\theminutes<10{0}\fi\theminutes}
} \makeatletter\def\thefootnote{\@arabic\c@footnote}\makeatother

\title{\bf Metrization of weighted graphs}

\author{\bf Oleksiy Dovgoshey, Olli Martio and Matti Vuorinen}

\date{}

\maketitle
\begin{abstract}
We find a set of necessary and sufficient conditions under which the
weight $w:E\to\mathbb R^+$ on the graph $G=(V,E)$ can be extended to
a pseudometric $d:V\times V\to\mathbb R^+$. If these conditions hold
and $G$ is a connected graph, then the set $\mathfrak M_w$ of all
such extensions is nonvoid and the shortest-path pseudometric $d_w$
is the greatest element of $\mathfrak M_w$ with respect to the
partial ordering $d_1 \leqslant d_2$ if and only if $d_1(u,v)
\leqslant d_2(u,v)$ for all $u,v\in V$. It is shown that every
nonvoid poset $(\mathfrak M_w,\leqslant)$ contains the least element
$\rho_{0,w}$ if and only if $G$ is a complete $k$-partite graph with
$k\geqslant 2$ and in this case the explicit formula for computation
of $\rho_{0,w}$ is obtained.
 \\\\
{\bf Key words:} Weighted graph, Metric space, Embedding of graph,
Shortest-path metric, Infinite graph, Complete $k$-partite graph.\\\\
{\bf 2010 Mathematics Subject Classification:} 05C10, 05C12, 54E35
\end{abstract}

\section{Introduction}

Recall the basic definitions that we adopt here. A graph $G$ is an
ordered pair $(V,E)$ consisting of a set $V=V(G)$ of {\it vertices}
and a set  $E=E(G)$ of {\it edges}. In this paper we study the {\it
simple} graphs which are finite, $\card(V)<\infty$, or infinite,
$\card(V)=\infty$. Since our graph $G$ is simple we can identify
$E(G)$ with a set of two-element subsets of $V(G)$, so that each
edge is an unordered pair of distinct vertices. As usual we suppose
that $V(G)\cap E(G)=\emptyset$. The edge $e=\{u,v\}$ is said to {\it
join} $u$ and $v$, and the vertices $u$ and $v$ are called {\it
adjacent} in $G$. The graph $G$ is {\it empty} if no two vertices
are adjacent, i.e. if $E(G)=\emptyset$. We use the standard
definitions of the {\it path}, the {\it cycle}, the {\it subgraph}
and {\it supergraph}, see, for example, \cite[p.~4, p.~40]{BM}. Note
only that all paths and cycles are finite and simple graphs.

The following, basic for us, notion is a {\it weighted graph}
$(G,w)$, i.e., a graph $G=(V,E)$ together with a weight
$w:E\to\mathbb R^+$ where $\mathbb{R}^+=[0,\infty)$. If $(G,w)$ is a
weighted graph, then for each subgraph $F$ of the graph $G$  define
\begin{equation}\label{e1}
w(F):=\sum_{e\in E(F)}w(e).
\end{equation}
The last sum may be equal $+\infty$ if $F$ is infinite.

Recall also that a {\it pseudometric} $d$ on the set $X$ is a
function $d:X\times X\to\mathbb R^+$ such that $d(x,x)=0,\
d(x,y)=d(y,x)$ and $d(x,y)\leq d(x,z)+d(z,y)$ for all $x,y,z\in X$.
The pseudometric $d$ on $X$ is a {\it metric} if, in addition,
$$
(d(x,y)=0)\Rightarrow(x=y)
$$
for all $x,y\in X$. Using a pseudometric $d$ on the set $V$ of
vertices of the graph $G=(V,E)$ one can simply define a weight
$w:E\to\mathbb R^+$ by the rule
\begin{equation}\label{e2}
w(\{u,v\}):=d(u,v)
\end{equation}
for all edges $\{u,v\}\in E(G)$. The correctness of this
definition follows from the symmetry of  $d$.

A legitimate  question to raise in this point is whether there
exists a pseudometric $d$ such that the given weight $w:E\to\mathbb
R^+$ is produced as in \eqref{e2}. If yes, then we say that $w$ is a
{\it metrizable weight}.

\begin{figure}[h]
\includegraphics[width=\textwidth,keepaspectratio]{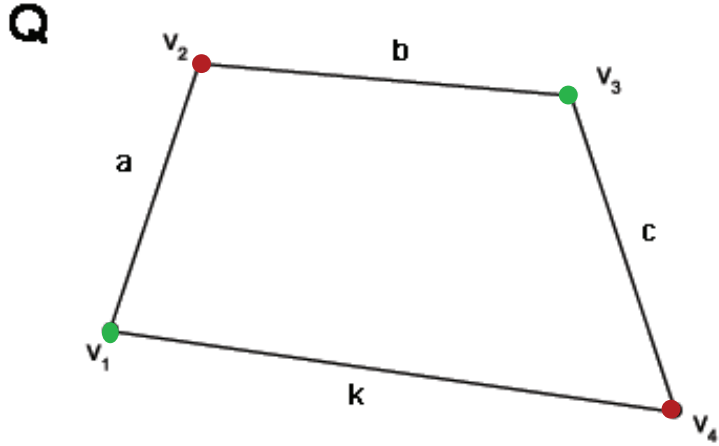}
\caption{Here $(Q,w)$ is a weighted quadrilateral with
$V(Q)=\{v_1,v_2,v_3,v_4\}$,
$E(Q)=\{\{v_1,v_2\},\{v_2,v_3\},\{v_3,v_4\},\{v_4,v_1\}\}$ and
\newline
$w(\{v_1,v_2\})=a,\quad w(\{v_2,v_3\})=b,\quad
w(\{v_3,v_4\})=c,\quad w(\{v_4,v_1\})=k. $ }
\end{figure}

The above formulated question seems to be converse for the question
of embeddings of metrics into weighted graphs. (In the standard
terminology one says about the {\it realization} of metric spaces by
graphs.) This topic is rich and has many applications in various
areas, such as psychology, phylogenetic analysis and recent
applications from the field of computer science. Some results and
references in this direction can be found in \cite{DL} and \cite{L}.

If $(G,w)$ is a weighted graph with metrizable $w$, then we shall
denote by $\mathfrak{M}_w$ the set of all pseudometrics $d:V\times
V\to \mathbb{R}^+$ such that
$$
d(v_i,v_j)=w(\{v_i,v_j\})
$$
for all $\{v_i,v_j\}\in E(G)$.

The starting point of our considerations is the following Model
Example.

\begin{theorem}[Model Example]\label{t0}
Let $(Q,w)$ be a weighted graph depicted by Figure 1. The weight $w$
on the graph $Q$ is metrizable if and only if
\begin{equation}\label{e1*}
2\max\{a,b,c,k\}\leqslant a+b+c+k.
\end{equation}

If $w$ is metrizable, then for each $d\in\mathfrak{M}_w$ we have the
double inequalities
$$
\max\{|b-c|,|a-k|\}\leqslant d(v_2,v_4)\leqslant\min\{b+c,a+k\}
$$
and
\begin{equation}\label{e2*}
\max\{|a-b|,|c-k|\}\leqslant d(v_1,v_3)\leqslant\min\{a+b,c+k\}.
\end{equation}

Conversely, if $p$ and $q$ are real numbers such that
$$
\max\{|b-c|,|a-k|\}\leqslant p\leqslant\min\{b+c,a+k\}
$$
and
\begin{equation}\label{e3*}
\max\{|a-b|,|c-k|\}\leqslant q\leqslant\min\{a+b,c+k\},
\end{equation}
then there is $d\in\mathfrak{M}_w$ with
$$
d(v_2,v_4)=p\qquad,\qquad d(v_1,v_3)=q.
$$
\end{theorem}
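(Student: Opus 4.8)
The plan is to reduce everything to the twelve triangle inequalities on the four-point set $V=V(Q)=\{v_1,v_2,v_3,v_4\}$. A pseudometric $d$ extending $w$ is completely determined by the two unknown values $q:=d(v_1,v_3)$ and $p:=d(v_2,v_4)$, all other nondiagonal values being fixed by $w$. Since on a four-point set a symmetric, nonnegative, diagonal-vanishing function is a pseudometric exactly when the triangle inequality holds for each of the four three-element subsets of $V$, the first step is to write out those inequalities and see how they constrain $p$ and $q$.

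Carrying this out: the triple $\{v_1,v_2,v_3\}$ yields $|a-b|\leqslant q\leqslant a+b$; the triple $\{v_1,v_3,v_4\}$ yields $|c-k|\leqslant q\leqslant c+k$; the triple $\{v_1,v_2,v_4\}$ yields $|a-k|\leqslant p\leqslant a+k$; and the triple $\{v_2,v_3,v_4\}$ yields $|b-c|\leqslant p\leqslant b+c$. The key observation is that no three-element subset of $V$ can contain both of the pairs $\{v_1,v_3\}$ and $\{v_2,v_4\}$, so the constraints on $p$ and on $q$ decouple completely. Intersecting the two bounds on each variable gives precisely $\max\{|a-b|,|c-k|\}\leqslant q\leqslant\min\{a+b,c+k\}$ and $\max\{|b-c|,|a-k|\}\leqslant p\leqslant\min\{b+c,a+k\}$. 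Read one way this is the asserted double inequality valid for every $d\in\mathfrak M_w$; read the other way it gives the converse assertion, since for any $p,q$ in these intervals the function defined by $d(v_1,v_3)=q$, $d(v_2,v_4)=p$ (with the edge values of $w$ and zeros on the diagonal, extended symmetrically) satisfies all twelve inequalities by construction and hence belongs to $\mathfrak M_w$; note $p,q\geqslant 0$ is automatic because the lower bounds are nonnegative.

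It then remains to characterize metrizability as the nonemptiness of both intervals. I would expand $\max\{|a-k|,|b-c|\}\leqslant\min\{a+k,b+c\}$ into its four componentwise inequalities, discard the two trivial ones $|a-k|\leqslant a+k$ and $|b-c|\leqslant b+c$, and reduce the nonemptiness of the $p$-interval to the system $a\leqslant b+c+k$, $b\leqslant a+c+k$, $c\leqslant a+b+k$, $k\leqslant a+b+c$; the same manipulation for the $q$-interval produces the identical system. Finally, this system is equivalent to $2\max\{a,b,c,k\}\leqslant a+b+c+k$: the latter gives $2x\leqslant 2\max\{a,b,c,k\}\leqslant a+b+c+k$ for each $x\in\{a,b,c,k\}$, and conversely the system applied to whichever of $a,b,c,k$ attains the maximum yields \eqref{e1*}. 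The only real work here is the routine bookkeeping of the twelve inequalities and the elementary max/min algebra; the decoupling of the $p$- and $q$-constraints is what makes the double inequalities and the converse come out essentially for free, so I do not anticipate a genuine obstacle.
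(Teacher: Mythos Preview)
Your argument is correct. The reduction to the twelve triangle inequalities, the observation that the constraints on $p$ and $q$ decouple because no $3$-element subset of $V(Q)$ contains both diagonals, and the algebra showing that nonemptiness of either interval is equivalent to $2\max\{a,b,c,k\}\leqslant a+b+c+k$ are all sound.

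As for comparison: the paper does not actually prove this theorem but cites it from \cite{DP}, using it only as a motivating example. That said, the paper's general machinery does recover it: the quadrilateral $Q$ is the complete bipartite graph $K_{2,2}$, so Theorem~\ref{t1} gives the metrizability criterion (the only cycle in $Q$ is $Q$ itself), Proposition~\ref{p7} identifies the upper bounds as $d_w$, Theorem~\ref{t21} identifies the lower bounds as $\rho_{0,w}$ via formula~\eqref{e54}, and Theorem~\ref{t40} supplies the converse since every part of $K_{2,2}$ has cardinality $2$. Your direct four-point argument is more elementary and self-contained; the paper's route is longer but situates the Model Example as the smallest instance of a structural phenomenon about complete multipartite graphs with small parts. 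In particular, your ``decoupling'' observation is exactly the special case of the argument in the proof of Theorem~\ref{t40} that at most one of the three pairs $\{u,v\},\{u,p\},\{v,p\}$ can be a non-edge when all parts have size at most two.
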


This theorem was proved in \cite{DP} and used there as a base to
finding of extremally Ptolemeic and extremally non-Ptolemeic metric
spaces. The results of the present paper generalize the Model
Example to the case of arbitrary (finite or infinite) weighted
graphs $(G,w)$.
\begin{itemize}
\item[\rm --] Theorem \ref{t1} gives  necessary and sufficient
conditions under which a weight $w$ is metrizable. The key point
here is an extension of inequality \eqref{e1*} to an arbitrary cycle
$C\subseteq G$.

\item[\rm --] Proposition \ref{p7} claims that for connected $G$ and
metrizable $w$ the shortest-path pseudometric $d_w$ belongs to
$\mathfrak{M}_w$ and that this pseudometric is the greatest element
of $\mathfrak{M}_w$. The reader can observe that the right-side in
double inequalities \eqref{e2*} and \eqref{e3*} are, in fact,
$d_w(v_2,v_4)$ and $d_w(v_1,v_3)$.

\item[\rm --] Theorem \ref{t21} shows that the least pseudometric in
$\mathfrak{M}_w$, (see the left-side in \eqref{e2*}, \eqref{e3*})
exists for each metrizable $w$ if and only if $G$ is a complete
$k$-partite graph with $k\geqslant 2$.

\item[\rm --] In Theorem \ref{t40} we show that for complete
$k$-partite graphs $G$ with $k\geqslant 2$ and with the cardinality
of partitions $\leqslant 2$ we have the analog of the last part of
the Model Example: a symmetric function $f:V\times V\to
\mathbb{R}^+$ belongs to $\mathfrak{M}_w$ if and only if it "lies
between" the greatest element of $\mathfrak{M}_w$ and the least one.
\end{itemize}

Moreover in Theorem \ref{t13} we describe the structure of connected
graphs $G$ which admit strictly positive metrizable weights $w$ such
that $\mathfrak{M}_w$  does not contain any metrics.

\section{Embeddings of weighted graphs\\ into pseudometric spaces}

Let $(G,w)$ be a weighted graph and let $u,v$ be vertices belonging
to a connected component of $G$. Let us denote $\mathcal
P_{u,v}=\mathcal P_{u,v}(G)$ the set of all paths joining  $u$ and
$v$ in $G$. Write
\begin{equation}\label{e3}
d_w(u,v):=\inf\{w(F):F\in\mathcal P_{u,v}\}
\end{equation}
where $w(F)$ is the weight of the path $F$, see formula \eqref{e1}.
It is well known for the connected graph $G$ that the function $d_w$
is a pseudometric on the set $V(G)$. This pseudometric will be
termed as the {\it weighted shortest-path pseudometric}. It
coincides with the usual path metric if $w(e)=1$ for all $e\in
E(G)$.

\begin{theorem}\label{t1}
Let $(G,w)$ be a weighted graph. The following statements are
equivalent.
\begin{itemize}

\item[\rm(i)] The weight $w$ is metrizable.

\item[\rm(ii)] The equality
\begin{equation}\label{e4}
w(\{u,v\})=d_w(u,v)
\end{equation}
holds for all $\{u,v\}\in E(G)$.

\item[\rm(iii)] For every cycle $C\subseteq G$ we have the
inequality
\begin{equation}\label{e5}
2\max_{e\in E(C)}w(e)\leq w(C).
\end{equation}
\end{itemize}
\end{theorem}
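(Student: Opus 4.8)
The plan is to prove the cycle of implications $\mathrm{(i)}\Rightarrow\mathrm{(iii)}\Rightarrow\mathrm{(ii)}\Rightarrow\mathrm{(i)}$, since $\mathrm{(ii)}\Rightarrow\mathrm{(i)}$ is essentially immediate from the definition of $\mathfrak M_w$ and the known fact that $d_w$ is a pseudometric on each connected component.

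\emph{The implication $\mathrm{(i)}\Rightarrow\mathrm{(iii)}$.} Suppose $w$ is metrizable and let $d\in\mathfrak M_w$. Fix a cycle $C\subseteq G$ with edge set $E(C)=\{e_1,\dots,e_n\}$, written cyclically as $e_j=\{x_{j-1},x_j\}$ with $x_0=x_n$, and let $e_m=\{x_{m-1},x_m\}$ be an edge of maximal weight in $C$. Applying the triangle inequality for $d$ repeatedly along the path $x_m,x_{m+1},\dots,x_{m-1}$ (the rest of the cycle) gives
$$
w(e_m)=d(x_{m-1},x_m)\leq\sum_{j\neq m}d(x_{j-1},x_j)=\sum_{j\neq m}w(e_j)=w(C)-w(e_m),
$$
which rearranges to \eqref{e5}. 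This is the short, routine direction.

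\emph{The implication $\mathrm{(iii)}\Rightarrow\mathrm{(ii)}$.} Assume \eqref{e5} holds for every cycle of $G$. Since $d_w(u,v)\leq w(\{u,v\})$ is automatic (the single edge $\{u,v\}$ is a path joining $u$ and $v$), it suffices to prove the reverse inequality $w(\{u,v\})\leq d_w(u,v)$ for each edge $\{u,v\}\in E(G)$, i.e. that no path $F\in\mathcal P_{u,v}$ has $w(F)<w(\{u,v\})$. Suppose for contradiction such an $F$ exists; we may take $F$ to be internally disjoint from the edge $\{u,v\}$, and then $F$ together with the edge $\{u,v\}$ forms a cycle $C$ (after discarding repeated vertices one reduces to a genuine cycle, using that path weights only decrease under such reductions since $w\geq 0$). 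For this cycle, $w(\{u,v\})>w(F)=w(C)-w(\{u,v\})$, so $2w(\{u,v\})>w(C)$, forcing $w(\{u,v\})=\max_{e\in E(C)}w(e)$ and contradicting \eqref{e5}. One must handle the possibility $w(F)=+\infty$ or $F$ infinite — but $F$ is a path, hence finite, and the infimum in \eqref{e3} is approached by finite paths, so this causes no trouble; the reduction-to-a-cycle bookkeeping is the one place requiring a little care.

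\emph{The implication $\mathrm{(ii)}\Rightarrow\mathrm{(i)}$.} If \eqref{e4} holds, define $d$ on each connected component by $d_w$ and declare $d(u,v)=0$ if $u=v$ and otherwise assign, say, a large enough value (or simply note that the component-wise $d_w$ already gives a pseudometric on all of $V$ once we set $d(u,v)$ for vertices in different components to be the maximum of the two intra-component diameters plus one, or any value making the triangle inequality hold — in fact putting $d(u,v)=+\infty$-free by extending appropriately). Cleanly: $d_w$ restricted to a component is a pseudometric there, and the disjoint union of pseudometric spaces can be made into a single pseudometric space (e.g. by gluing via a basepoint or adding a constant between components), and on edges it agrees with $w$ by \eqref{e4}. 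Hence $w$ is metrizable. The main obstacle in the whole argument is the combinatorial reduction in $\mathrm{(iii)}\Rightarrow\mathrm{(ii)}$: turning an arbitrary low-weight path plus the edge $\{u,v\}$ into an honest cycle to which hypothesis \eqref{e5} applies, while controlling weights throughout.
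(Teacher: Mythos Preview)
Your proof is correct and mirrors the paper's argument, which proves the cycle $(\mathrm{i})\Rightarrow(\mathrm{ii})\Rightarrow(\mathrm{iii})\Rightarrow(\mathrm{i})$ instead of your $(\mathrm{i})\Rightarrow(\mathrm{iii})\Rightarrow(\mathrm{ii})\Rightarrow(\mathrm{i})$, with the same underlying steps (triangle inequality along the complementary arc of a cycle, and the basepoint-gluing construction for disconnected $G$). Note that your worry about ``reducing to a genuine cycle'' in $(\mathrm{iii})\Rightarrow(\mathrm{ii})$ is moot: paths here are simple by definition, so any $F\in\mathcal P_{u,v}$ of length $\geq 2$ together with the edge $\{u,v\}$ is already a cycle, no reduction needed.
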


It seems to be interesting to have conditions under which the set
$\mathfrak{M}_w$ contains some metrics of a special type. In
particular: What are restrictions on the weight $w$ guaranteeing the
existence of ultrametrics (or pseudoultrametrics) in the set
$\mathfrak{M}_w$?

\begin{remark}\label{r4}
If $C$ is a 3-cycle, then \eqref{e5} turns to the symmetric form
$$
2\max\{w(e_1),w(e_2),w(e_3)\}\leq w(e_1)+w(e_2)+w(e_3)
$$
of the triangle inequality. Thus \eqref{e5} can be considered as a
``cyclic generalization'' of this inequality.
\end{remark}
\begin{remark}\label{r5}
Theorem \ref{t1} evidently holds if $G$ is the {\it null graph},
i.e. if $V(G)=\emptyset$. In this case the related metric space
$(V,d)$ is empty.
\end{remark}
\begin{proof}[Proof of Theorem \ref{t1}]
{\bf(i)$\Rightarrow$(ii)} Suppose that there is a pseudometric
$\rho$ on $V$ such that
$$
w(\{u,v\})=\rho(u,v)
$$
for each $\{u,v\}\in E(G)$. Then for every sequence of points
$v_1,\dots v_n,\ v_1=u$ and $v_n=v,\ v_i\in V,\ i=1,\dots,n$, the
triangle inequality implies
$$
w(\{u,v\})=\rho(v_1,v_n)\leq\sum_{i=1}^{n-1}\rho(v_i,v_{i+1}).
$$
Consequently for paths $F\subseteq G$ joining   $u$ and $v$ the
inequality
$$
w(\{u,v\})\leq w(F)
$$
holds. Passing in the last inequality to the infimum over the set
$\{w(F): F\in\mathcal P_{u,v}\}$ we obtain
\begin{equation}\label{*}
\rho(u,v)=w(\{u,v\})\leq d_w(u,v),
\end{equation}
see \eqref{e3}. The converse inequality $w(\{u,v\})\geq d_w(u,v)$
holds because the path $(u=v_1,v_2=v)$ belongs to $\mathcal
P_{u,v}$.

{\bf(ii)$\Rightarrow$(iii)} Suppose statement (ii) holds. Let $C$
be an arbitrary cycle in $G$ and let $\{u,v\}\in E(C)$ be an edge
for which
\begin{equation}\label{e8}
w(\{u,v\})=\max_{e\in E(C)}w(e).
\end{equation}
Deleting the edge $\{u,v\}$ from the cycle $C$ we obtain the path
$F:=C\setminus\{u,v\}$ joining  the vertices $u$ and $v$. Using
equalities \eqref{e3}, \eqref{e4} and \eqref{e8} we conclude that
\begin{equation}\label{e9}
\max_{e\in E(C)}w(e)=d_w(u,v)\leq w(F).
\end{equation}
Since $w(F)=w(C)-w(\{u,v\})$, \eqref{e5} follows from \eqref{e9}.

{\bf(iii)$\Rightarrow$(i)} Suppose (iii) is true. If $G$ is a
connected graph, then we can equip $G$ by the weighted shortest-path
pseudometric $d_w$, so it is sufficient to show that
$d_w\in\mathfrak{M}_w$. Let $\{u,v\}\in E(G)$. In the case where there
is no cycle $C\subseteq G$ such that $\{u,v\}\in E(C)$ the path
$(u=v_1,v_2=v)$ is the unique path joining $u$ and $v$. Hence, in
this case, equality \eqref{e4} follows from \eqref{e3}. Let
$P=(u=v_1,\dots,v_{k+1}=v)$ be an arbitrary $k$-path, $k\geq 2$,
joining $u$ and $v$. Then $C:=(u=v_1,\dots,v_{k+1},v_{k+2}=u)$ is a
$k+1$-cycle with $\{u,v\}\in E(C)$. Hence by \eqref{e5} we have
$$
2w(\{u,v\})\leq 2\max_{e\in E(C)}w(e)\leq w(C)=w(P)+w(\{u,v\}).
$$
This implies the inequality $ w(\{u,v\})\leq w(P) $ for all
$P\in\mathcal P_{u,v}$. Consequently $ w(\{u,v\})\leq d_w(u,v). $
The converse inequality is trivial. Thus if $G$ is connected, then
$d_w\in\mathfrak{M}_w$.

Consider now the case of disconnected graph $G$. Let
$\{G_i:i\in\mathcal I\}$ be the set of all components of $G$ and let
$\{v_i^*:i\in\mathcal I\}$ be the subset of $V(G)$ such that
$$
v_i^*\in V(G_i)
$$
for each $i\in\mathcal I$. We choose an index $i_0\in\mathcal I$ and
fix nonnegative constants $a_i,\ i\in\mathcal I$ such that
$a_{i_0}=0$. Let us define the function $\rho:V(G)\times V(G)\to
\mathbb{R}^+$ as
\begin{equation}\label{e11}
\rho(u,v)=d_{w_i}(u,v)
\end{equation}
if $u$ and $v$ lie in the same component $G_i$ and as
\begin{equation}\label{e12}
\rho(u,v)=a_i+a_j+d_{w_i}(u,v_i^*)+d_{w_j}(v,v_j^*)
\end{equation}
if $u\in G_i$ and $v\in G_j$ with $i\neq j$. Here $w_i$ is the
restriction of $w$ on the set $E(G_i)$ and $d_{w_i}$ is the weighted
shortest-path pseudometric corresponding to the weight $w_i$. It is
easy to see that
\begin{equation}\label{e12*}
a_i=\rho(v_i^*,v_{i_0}^*)
\end{equation}
for all $i\in\mathcal I$.

It follows directly from \eqref{e11} and \eqref{e12} that $\rho$ is
a pseudometric on $V(G)$ and $\rho\in\mathfrak{M}_w$.
\end{proof}

\begin{remark}\label{r6}
To obtain the pseudometric $\rho$ described by formulas \eqref{e11},
\eqref{e12} we can consider the supergraph $G^*$ of $G$ such that
$V(G^*)=V(G)$ and
$$
E(G^*)=E(G)\cup\{\{v_i^*,v_{i_0}^*\}:i\in\mathcal
I\setminus\{i_0\}\},
$$
see Fig.~2. Then $G^*$  is a connected graph with the same set of
cycles as in $G$ and all edges $\{v^*_i,v^*_{i_0}\}$ are bridges of
$G^*$. Now we can extend the weight $w:E(G)\to\mathbb R^+$ to a
weight $w^*:E(G^*)\to\mathbb R^+$ by the rule:
$$
w^*(\{u,v\}):=\begin{cases} w(\{u,v\})&\text{if }\{u,v\}\in E(G)\\
a_i&\text{if }\{u,v\}=\{v_i^*,v_{i_0}^*\},\ i\in\mathcal
I\setminus\{i_0\}.
\end{cases}
$$
It can be shown that the pseudometric $\rho$ is simply the weighted
shortest-path pseudometric with respect to the weight $w^*$.
\end{remark}

\begin{figure}[h]
\includegraphics[width=\textwidth,keepaspectratio]{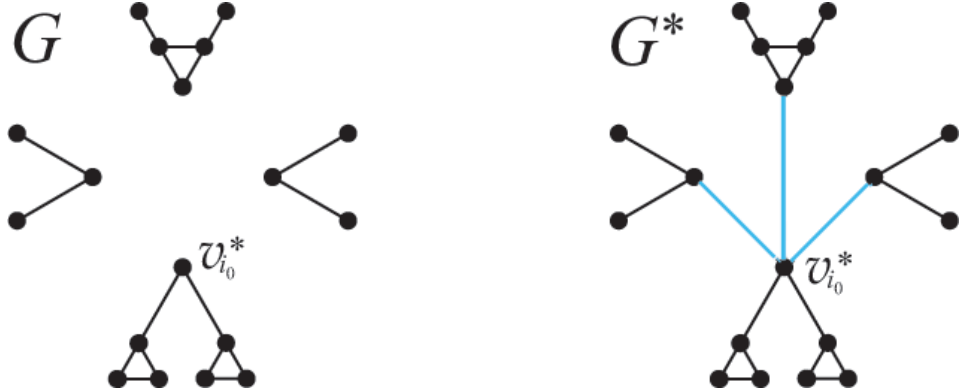}
\caption{Inclusion of the disconnected $G$ in the connected $G^*$.
There are no new cycles in $G^*$.}
\end{figure}

Let $w_1$ and $w$ be two weights with the same underlieing graph
$G$. Suppose the weight $w_1$ is metrizable. What are condition
under which the weight $w$ is also metrizable?

To describe such type conditions we recall the definition of a {\it
bridge}.

\begin{definition}\label{br.d}
Let $G$ be a graph and let $e_0\in E(G)$. For a connected $G$, $e_0$
is a bridge of $G$, if $G-e_0$ is a disconnected graph. If $G$ is
disconnected and $G_0$ is the connected component of $G$ such that
$e_0\in E(G_0)$, then $e_0$ is a bridge of $G$, if $G_0-e_0$ is
disconnected.
\end{definition}

Above we denote by $G-e_0$ the {\it edge-deleted} subgraph of $G$,
see, for example, \cite[p. 40]{BM}.

For weights $w_1$ and $w_2$ on $E(G)$ define a set $w_1\Delta
w_2\subseteq E(G)$ as
$$
w_1\Delta w_2 = \{e\in E(G): w_1(e)\neq w_2(e)\}.
$$

\begin{proposition}\label{br.pr}
Let $(G,w_1)$ be a weighted graph with a metrizable $w_1$ and let
$E_1$ be a subset of $E(G)$. The following statements are
equivalent.
\begin{itemize}

\item[\rm(i)] All weights $w:E(G)\to\mathbb{R}^+$ with $w_1\Delta w\subseteq E_1$ are metrizable.

\item[\rm(ii)] Each element $e\in E_1$ is a bridge of the graph $G$.
\end{itemize}
\end{proposition}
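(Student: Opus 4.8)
The plan is to prove both implications by exploiting the characterization of metrizability via cycles (Theorem~\ref{t1}(iii)): a weight is metrizable iff every cycle $C$ satisfies $2\max_{e\in E(C)}w(e)\le w(C)$. The crucial observation is that $e_0$ is a bridge of $G$ precisely when $e_0$ lies on no cycle of $G$; so bridges are ``invisible'' to the cyclic inequality, whereas a non-bridge edge can be forced to violate it by inflating its weight.

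For the direction (ii)$\Rightarrow$(i), suppose every $e\in E_1$ is a bridge, and let $w$ be any weight with $w_1\Delta w\subseteq E_1$. Fix an arbitrary cycle $C\subseteq G$. Since no edge of $E_1$ lies on a cycle, $E(C)\cap E_1=\emptyset$, hence $w$ and $w_1$ agree on all of $E(C)$. Therefore $2\max_{e\in E(C)}w(e)=2\max_{e\in E(C)}w_1(e)\le w_1(C)=w(C)$, the inequality holding because $w_1$ is metrizable and Theorem~\ref{t1} applies. As $C$ was arbitrary, Theorem~\ref{t1}(iii)$\Rightarrow$(i) gives that $w$ is metrizable.

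For (i)$\Rightarrow$(ii), I argue by contraposition: suppose some $e_0=\{u,v\}\in E_1$ is \emph{not} a bridge. Then $e_0$ lies on some cycle $C_0\subseteq G$; let $P:=C_0\setminus\{e_0\}$ be the complementary path joining $u$ and $v$, and put $L:=w_1(P)$, the total $w_1$-weight of that path. Now define a weight $w$ by setting $w(e_0):=L+1$ (or any value strictly exceeding $L$) and $w(e):=w_1(e)$ for all other edges; then $w_1\Delta w\subseteq\{e_0\}\subseteq E_1$. For the cycle $C_0$ we have $w(C_0)=w(e_0)+w(P)=(L+1)+L=2L+1$, while $\max_{e\in E(C_0)}w(e)\ge w(e_0)=L+1$, so $2\max_{e\in E(C_0)}w(e)\ge 2L+2>2L+1=w(C_0)$, violating \eqref{e5}. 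By Theorem~\ref{t1} this $w$ is not metrizable, contradicting (i); hence every $e\in E_1$ must be a bridge.

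The only subtlety — and the one point deserving care rather than real difficulty — is the equivalence ``$e_0$ is a bridge $\iff$ $e_0$ lies on no cycle,'' which must be checked against Definition~\ref{br.d} in both the connected and disconnected cases; it reduces to the standard fact that in a connected graph an edge is a bridge iff it belongs to no cycle, applied within the component $G_0$ containing $e_0$. One should also note in the disconnected case that the inflated weight $w$ still differs from $w_1$ only on $e_0$ and that $e_0$ together with its complementary path lies entirely inside one component, so the cycle $C_0$ witnessing non-metrizability is genuinely a cycle of $G$. With that remark in place the argument is complete.
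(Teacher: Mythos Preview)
Your proof is correct and follows essentially the same route as the paper: both directions rest on Theorem~\ref{t1}(iii) together with the bridge characterization (Lemma~\ref{br.l}), and your counterexample weight $w(e_0)=L+1=1+w_1(C_0)-w_1(e_0)$ is exactly the paper's choice. Your write-up is in fact a bit more explicit than the paper's, which leaves the (ii)$\Rightarrow$(i) direction as a one-line remark.
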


\begin{lemma}\label{br.l}
An edge $e\in E(G)$ is a bridge if and only if $e$ is not in $E(C)$
for any cycle $C\subseteq G$.
\end{lemma}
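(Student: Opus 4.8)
The plan is to prove both implications by contraposition, using the characterization of bridges via connectivity given in Definition \ref{br.d}. Throughout I may assume, by restricting to the connected component containing $e$, that $G$ is connected; this is harmless because Definition \ref{br.d} reduces the disconnected case to the connected component $G_0$ containing $e$, and every cycle through $e$ lies entirely in that component.

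For the implication ``$e$ not a bridge $\Rightarrow$ $e$ lies on a cycle'': write $e=\{u,v\}$ and suppose $G-e$ is connected. Then there is a path $P$ in $G-e$ joining $u$ and $v$. Since $e\notin E(P)$, appending the edge $e$ to $P$ closes it up into a cycle $C\subseteq G$ with $e\in E(C)$; one should note that $P$ has length at least $2$ (it cannot be the single edge $\{u,v\}$, which was deleted), so $C$ is a genuine cycle of length at least $3$, consistent with the convention that cycles are simple.

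For the implication ``$e$ lies on a cycle $\Rightarrow$ $e$ not a bridge'': suppose $e=\{u,v\}\in E(C)$ for some cycle $C\subseteq G$. Then $C-e$ is a path in $G-e$ joining $u$ and $v$. To see that $G-e$ is connected, take any two vertices $x,y\in V(G)=V(G-e)$; since $G$ is connected there is a path $Q$ from $x$ to $y$ in $G$, and each occurrence of the edge $e$ along $Q$ can be rerouted through the path $C-e$, producing a walk from $x$ to $y$ in $G-e$, hence a path. Thus $G-e$ is connected and $e$ is not a bridge.

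I do not expect a serious obstacle here; the only point requiring mild care is bookkeeping around the conventions (cycles are simple and finite, a cycle has length $\geqslant 3$, and $V(G-e)=V(G)$) so that the constructed objects genuinely are paths and cycles in the sense fixed at the start of the paper. The rerouting argument in the second implication is the step most prone to sloppy wording, since one passes through walks before extracting a path, but it is entirely routine.
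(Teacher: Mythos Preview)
Your proof is correct and is the standard argument; there is no gap. Note, however, that the paper does not actually prove this lemma: it cites the finite case as \cite[Theorem~2.3]{TH} and remarks that the infinite case is completely analogous, omitting the details. Your write-up therefore supplies what the paper leaves out, and since your argument never uses finiteness of $V(G)$ (paths and cycles are finite by convention, and that is all you need), it handles the finite and infinite cases uniformly, which is exactly the point the paper is gesturing at.
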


This lemma is known for the finite graphs $G$, see \cite[Theorem
2.3]{TH}. The proof for infinite $G$ is completely analogous, so
we omit it here.

\begin{proof}[\it Proof of Proposition \ref{br.pr}]
The implication (ii)$\Rightarrow$(i) follows directly from
condition (iii) of Theorem \ref{t1} and Lemma \ref{br.l}.
Conversely,  if some $e_0\in E_1$ is not bridge, then by Lemma
\ref{br.l} there is a cycle $C_0\subseteq G$ such that $e_0\in
E(C_0)$. Let us define the weight $w_0:E(G)\to\mathbb{R}$,
$$
w_0(e)=\begin{cases} w_1(e) & \text{ if } \quad e\neq e_0\\
1+w(C_0)-w(e_0) & \text{ if } \quad e=e_0.
\end{cases}
$$
Then we have $w_1\Delta w_0=\{e_0\}$ and
\begin{multline*}
2w_0(e_0)=2+2w_1(C_0)-2w_1(e_0)>
\\
(w_1(C_0)-w_1(e_0)+1)+(w_1(C_0)-w_1(e_0))=w_0(C_0).
\end{multline*}
It is clear, that $w_1\Delta w_0\subseteq E_1$ but, by Theorem
\ref{t13}, the weight $w_0$ is not metrizable. Thus the implication
(i)$\Rightarrow$(ii) follows.
\end{proof}

Recall that {\it  acyclic} graphs are usually called the {\it
forests}. Lemma \ref{br.l} implies that a graph $G$ is a forest if
and only if all $e\in E(G)$ are bridges of $G$. Hence as a
particular case of Proposition \ref{br.pr} we obtain

\begin{corollary}\label{c6}
The following conditions are equivalent for every graph $G$.
\begin{itemize}

\item[\rm(i)] $G$ is a forest.

\item[\rm(ii)]  Every weight $w:E(G)\to
\mathbb R^+$ is metrizable.
\end{itemize}
\end{corollary}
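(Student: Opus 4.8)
The plan is to derive this directly from Proposition \ref{br.pr} together with the characterization of bridges in Lemma \ref{br.l}. Indeed, apply Proposition \ref{br.pr} with the trivial choice $w_1\equiv 0$ — this weight is certainly metrizable, since the zero pseudometric on $V(G)$ realizes it — and with $E_1=E(G)$. Then condition (i) of Proposition \ref{br.pr} reads: \emph{every} weight $w:E(G)\to\mathbb R^+$ satisfies $w_1\Delta w\subseteq E(G)$ (which is automatic) and hence is metrizable; this is exactly condition (ii) of the corollary. Condition (ii) of the proposition becomes: every edge $e\in E(G)$ is a bridge of $G$.

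So the remaining step is to identify ``every edge of $G$ is a bridge'' with ``$G$ is a forest.'' By Lemma \ref{br.l}, an edge $e$ is a bridge precisely when it lies on no cycle of $G$; therefore all edges are bridges if and only if no edge of $G$ lies on any cycle, i.e.\ $G$ contains no cycle at all, i.e.\ $G$ is acyclic. Since acyclic graphs are by definition the forests, this gives the equivalence (i)$\Leftrightarrow$(ii) of the corollary. I would write the argument as the short chain: $G$ is a forest $\iff$ $G$ has no cycle $\iff$ every $e\in E(G)$ lies on no cycle $\iff$ every $e\in E(G)$ is a bridge (Lemma \ref{br.l}) $\iff$ every weight on $E(G)$ is metrizable (Proposition \ref{br.pr} applied to $w_1\equiv 0$, $E_1=E(G)$).

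There is essentially no obstacle here; the only point requiring a word of care is that the ``no new cycles'' and bridge machinery of Proposition \ref{br.pr} is already set up to handle both finite and infinite $G$, and that the zero weight is a legitimate (metrizable) choice for $w_1$, so Proposition \ref{br.pr} genuinely applies. One could alternatively bypass Proposition \ref{br.pr} and argue straight from Theorem \ref{t1}(iii): if $G$ is a forest it has no cycles, so condition (iii) is vacuously true for every $w$; conversely, if $G$ has a cycle $C$, a weight assigning a large value to one edge of $C$ and small values elsewhere violates \eqref{e5}. But since Proposition \ref{br.pr} and Lemma \ref{br.l} are already in hand, the two-line deduction above is the cleanest route, and that is what I would present.
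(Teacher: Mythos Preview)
Your proof is correct and follows exactly the paper's approach: the paper also observes (via Lemma \ref{br.l}) that $G$ is a forest iff every edge is a bridge, and then states the corollary ``as a particular case of Proposition \ref{br.pr}.'' Your explicit choice $w_1\equiv 0$, $E_1=E(G)$ just makes precise what the paper leaves implicit.
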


Our final corollary shows that the property of a weight to be
metrizable is local.

\begin{corollary}\label{c7*}
Let $(G,w_G)$ be a weighted graph. The weight $w_G$ is metrizable if
and only if the restrictions $w_H=w_G|_{E(H)}$ are metrizable for
all finite subgraphs $H$ of the graph $G$.
\end{corollary}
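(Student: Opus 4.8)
The plan is to use the cycle characterization in condition (iii) of Theorem~\ref{t1}, together with the elementary observation that every cycle in a graph is finite. The key point is that metrizability is equivalent to a condition---the inequality $2\max_{e\in E(C)}w(e)\leq w(C)$---that only ever quantifies over cycles, and each such cycle lives inside a finite subgraph.

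For the forward implication, suppose $w_G$ is metrizable and let $H$ be a finite subgraph of $G$. Any cycle $C\subseteq H$ is also a cycle in $G$, so by Theorem~\ref{t1}(iii) applied to $(G,w_G)$ we have $2\max_{e\in E(C)}w_G(e)\leq w_G(C)$; since $w_H$ agrees with $w_G$ on $E(H)\supseteq E(C)$, the same inequality holds for $w_H$, and hence $w_H$ is metrizable by Theorem~\ref{t1} again. For the converse, suppose every finite restriction $w_H$ is metrizable, and let $C\subseteq G$ be an arbitrary cycle. Recall from the Introduction that all cycles are finite graphs; take $H:=C$, which is then a finite subgraph of $G$ with $w_H=w_G|_{E(C)}$. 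By hypothesis $w_H$ is metrizable, so Theorem~\ref{t1}(iii) applied to $(C,w_H)$ gives $2\max_{e\in E(C)}w_G(e)=2\max_{e\in E(C)}w_H(e)\leq w_H(C)=w_G(C)$. Since $C$ was arbitrary, condition (iii) of Theorem~\ref{t1} holds for $(G,w_G)$, so $w_G$ is metrizable.

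I do not expect any serious obstacle here; the proof is essentially a reformulation of Theorem~\ref{t1}(iii) via the finiteness of cycles. The one point requiring a little care is making sure that Theorem~\ref{t1} is legitimately applied to the subgraph $H$ (or to $C$) as a weighted graph in its own right---this is fine because $(H,w_H)$ is a weighted graph in exactly the sense defined at the start, and the cycles of $H$ are precisely the cycles of $G$ contained in $H$. One could also phrase the argument slightly more slickly by noting directly that the set of cycles of $G$ is the union, over all finite subgraphs $H$, of the sets of cycles of $H$, so condition (iii) for $G$ is the conjunction of condition (iii) over all finite $H$; but the cycle-by-cycle version above is cleanest to write out.
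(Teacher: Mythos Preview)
Your proof is correct and follows exactly the approach the paper intends: the corollary is stated in the paper without proof, as an immediate consequence of Theorem~\ref{t1}(iii) together with the finiteness of cycles, and your argument spells this out cleanly.
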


\section{Maximality of the weighted \\shortest-path pseudometric}

Let $G$ be a graph and let $w$ be a metrizable weight on $E(G)$.
Recall that $\mathfrak M_w$ is the set of all pseudometrics $\rho$
on $V(G)$ satisfying the restriction
\begin{equation}\label{e14}
\rho(u,v)=w(\{u,v\})
\end{equation}
for each $\{u,v\}\in E(G)$. Let us introduce the ordering relation
$\leqslant$ on the set $\mathfrak M_w$ as
\begin{equation}\label{e14*}
(\rho_1\leqslant\rho_2)\quad \text{if and only if}\quad
(\rho_1(u,v)\leqslant\rho_2(u,v))
\end{equation}
for all $u,v\in E(G)$. A reasonable question to ask is whether it is
possible to find the greatest and least elements of the partially
ordered set $(\mathfrak M_w,\leqslant)$.

In the present section we show that the shortest-path pseudometric
$d_w$ is the greatest element in $(\mathfrak M_w,\leqslant)$ for a
connected $G$ and apply this result to the search of metrics in
$\mathfrak M_w$. The existence of the least element of the poset
$(\mathfrak M_w,\leqslant)$ will be discussed in Section 4.

\begin{proposition}\label{p7}
Let $(G,w)$ be a nonempty weighted graph with a metrizable weight
$w$. If $G$ is connected then the weighted shortest-path
pseudometric $d_w$ belongs to $\mathfrak{M}_w$ and this pseudometric
is the greatest element of the poset $(\mathfrak{M}_w,\leqslant)$,
i.e., the inequality
\begin{equation}\label{e15}
\rho\leqslant d_w
\end{equation}
holds for each $\rho\in\mathfrak{M}_w$. Conversely, if the poset
$(\mathfrak{M}_w,\leqslant)$ contains the greatest element, then $G$
is connected.
\end{proposition}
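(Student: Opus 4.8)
The plan is to dispose of the three assertions contained in the statement separately: first that $d_w\in\mathfrak{M}_w$, then that $d_w$ is an upper bound for the whole poset, and finally, for the converse, that disconnectedness of $G$ precludes a greatest element. For the first point I would not reprove anything: since $w$ is metrizable, statement (iii) of Theorem \ref{t1} holds, and the verification carried out in the implication (iii)$\Rightarrow$(i) of that theorem (connected case) shows \emph{exactly} that $w(\{u,v\})=d_w(u,v)$ for every $\{u,v\}\in E(G)$, i.e.\ $d_w\in\mathfrak{M}_w$; that $d_w$ is a bona fide pseudometric on $V(G)$ is recorded in the text preceding Theorem \ref{t1}, using connectedness of $G$.

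For the maximality, let $\rho\in\mathfrak{M}_w$ and fix $u,v\in V(G)$. For any path $F=(u=v_1,v_2,\dots,v_n=v)$ joining $u$ and $v$, iterating the triangle inequality and then using \eqref{e14} gives
\[
\rho(u,v)\leqslant\sum_{i=1}^{n-1}\rho(v_i,v_{i+1})=\sum_{i=1}^{n-1}w(\{v_i,v_{i+1}\})=w(F).
\]
Passing to the infimum over $F\in\mathcal P_{u,v}$ and recalling \eqref{e3} yields $\rho(u,v)\leqslant d_w(u,v)$, which is \eqref{e15}. This is the same argument as in the implication (i)$\Rightarrow$(ii) of Theorem \ref{t1}.

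For the converse, I would argue by contraposition: assume $G$ is disconnected, so there are distinct components $G_i,G_j$ and vertices $u\in V(G_i)$, $v\in V(G_j)$. Reusing the construction from the disconnected case of the proof of Theorem \ref{t1}, with $i_0:=j$ and with nonnegative constants $a_i:=t$ and $a_k:=0$ for $k\neq i$ (here $t\geqslant 0$ is a free parameter), formulas \eqref{e11}–\eqref{e12} produce for each $t$ a pseudometric $\rho_t\in\mathfrak{M}_w$ with
\[
\rho_t(u,v)=t+d_{w_i}(u,v_i^*)+d_{w_j}(v,v_j^*),
\]
so $\rho_t(u,v)\to+\infty$ as $t\to+\infty$. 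If $\rho^*\in\mathfrak{M}_w$ were a greatest element, then $\rho_t\leqslant\rho^*$ for every $t$, forcing $\rho_t(u,v)\leqslant\rho^*(u,v)<\infty$ for all $t$, a contradiction. Hence $(\mathfrak{M}_w,\leqslant)$ has no greatest element when $G$ is disconnected.

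I expect the only delicate point is the converse: one must be sure that each $\rho_t$ genuinely lies in $\mathfrak{M}_w$ — this is granted by the already-verified validity of the construction \eqref{e11}–\eqref{e12} for arbitrary nonnegative constants — and that $\mathfrak{M}_w\neq\emptyset$, so that ``greatest element'' is not vacuous, which holds because $w$ is metrizable. The first two assertions are essentially bookkeeping on top of the proof of Theorem \ref{t1}.
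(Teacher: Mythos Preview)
Your proof is correct and follows essentially the same approach as the paper: both invoke the (iii)$\Rightarrow$(i) part of Theorem~\ref{t1} for $d_w\in\mathfrak{M}_w$, both use the triangle-inequality-along-paths argument (the paper simply points back to inequality~\eqref{*}) for maximality, and both handle the converse by letting the free constants in the construction \eqref{e11}--\eqref{e12} tend to $+\infty$. Your treatment of the converse is in fact slightly tidier than the paper's, since you fix $i_0=j$ explicitly and vary a single parameter $t$, whereas the paper writes ``letting $a_i,a_j\to+\infty$'' even though one of these is pinned at $0$ by the choice $a_{i_0}=0$.
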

\begin{proof}
In fact, for connected $G$, the membership relation $d_w\in\mathfrak
M_w$ was justified in the third part of the proof of
Theorem~\ref{t1}. To prove \eqref{e15} see \eqref{*}.

If $G$ is disconnected and some vertices $u,v$ lie in distinct
components, $u\in G_i,\ v\in G_j$, then letting $a_i,a_j\to+\infty$
in \eqref{e12} we obtain
$$
\sup\{\diam_\rho(A):\rho\in\mathfrak M_w\}=\infty
$$
for the two-element set $A=\{u,v\}$. Thus the poset $(\mathfrak M_w,
\leqslant)$ does not contain the greatest element.
\end{proof}

\begin{remark}\label{r8}
If $G$ is a disconnected graph and $u,v$ belong to distinct
connected components of $G$, then  according to \eqref{e3} we can
put
$$
d_w(u,v)=+\infty
$$
as for the infimum over the empty set. Under this agreement, the
weighted shortest-path pseudometric is also "the greatest element"
of $\mathfrak{M}_w$ for the disconnected graphs $G$.
\end{remark}

Recall that connected acyclic graphs are called the trees, so that
each tree is a connected forest. The last proposition and Corollary
\ref{c6} imply

\begin{corollary}\label{c11*}
A graph $G$ is a tree if and only if each weight
$w:E(G)\to\mathbb{R}^+$ is metrizable and the inequality
$$
\sup\{\diam_\rho(A):\rho\in\mathfrak M_w\}<\infty
$$
holds for every
finite $A\subseteq V(G)$.
\end{corollary}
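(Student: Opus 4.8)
The plan is to obtain the statement by simply combining two results already in hand: Corollary~\ref{c6}, which says that every weight on $G$ is metrizable if and only if $G$ is a forest, and Proposition~\ref{p7}, which identifies $d_w$ as the greatest element of $(\mathfrak{M}_w,\leqslant)$ when $G$ is connected and shows that no greatest element (indeed no bounded two-point diameter) exists when $G$ is disconnected. Since a tree is by definition a connected forest, once we are dealing with a forest $G$ it only remains to show that connectedness of $G$ is equivalent to the boundedness condition
$$
\sup\{\diam_\rho(A):\rho\in\mathfrak{M}_w\}<\infty \qquad\text{for every weight }w\text{ and every finite }A\subseteq V(G).
$$

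For the direct implication, suppose $G$ is a tree. Then $G$ is a forest, so by Corollary~\ref{c6} every weight $w:E(G)\to\mathbb{R}^+$ is metrizable and $\mathfrak{M}_w\neq\emptyset$. Since $G$ is connected, Proposition~\ref{p7} gives $d_w\in\mathfrak{M}_w$ together with $\rho\leqslant d_w$ for all $\rho\in\mathfrak{M}_w$, whence $\diam_\rho(A)\leqslant\diam_{d_w}(A)$ for every $A\subseteq V(G)$. It then suffices to note that $\diam_{d_w}(A)<\infty$ when $A$ is finite: connectedness of $G$ means every pair of vertices is joined by a (finite) path, so by \eqref{e3} the function $d_w$ is $\mathbb{R}^+$-valued, and over a finite $A$ the diameter is a maximum of finitely many finite numbers. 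Thus $\sup\{\diam_\rho(A):\rho\in\mathfrak{M}_w\}\leqslant\diam_{d_w}(A)<\infty$, as required.

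For the converse, assume every weight on $G$ is metrizable and that the boundedness condition holds; by Corollary~\ref{c6}, $G$ is a forest, so we only have to exclude the possibility that $G$ is disconnected. If it were, choose vertices $u\neq v$ in distinct connected components and put $A=\{u,v\}$. When $G$ has at least one edge, $(G,w)$ is a nonempty weighted graph for any (metrizable) weight $w$, and the disconnected case in the proof of Proposition~\ref{p7} — letting the constants $a_i,a_j$ in \eqref{e12} tend to $+\infty$ — produces pseudometrics in $\mathfrak{M}_w$ with arbitrarily large value on $\{u,v\}$, so $\sup\{\diam_\rho(A):\rho\in\mathfrak{M}_w\}=\infty$; when $G$ is edgeless, $\mathfrak{M}_w$ is the set of all pseudometrics on $V(G)$, and the pseudometric equal to $n$ on every pair of distinct vertices already gives $\diam_\rho(A)=n$ for each $n\in\mathbb{N}$. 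In either case the hypothesis is violated, so $G$ is connected and hence a tree. I do not expect a genuine obstacle here: the argument is essentially a repackaging of Corollary~\ref{c6} and Proposition~\ref{p7}, the only points requiring a little care being the finiteness of $\diam_{d_w}(A)$ for finite $A$ (which uses that all paths are finite, so $d_w$ never equals $+\infty$ on a connected graph), the trivial edgeless sub-case of a disconnected $G$ (where Proposition~\ref{p7}, stated for nonempty graphs, is replaced by a direct construction), and the degenerate instances such as the null graph or a one-vertex graph, which are settled by inspection.
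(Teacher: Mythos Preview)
Your proposal is correct and follows essentially the same approach as the paper, which simply states that the corollary is implied by Proposition~\ref{p7} and Corollary~\ref{c6}. You have supplied the details the paper omits, including the edgeless sub-case and the finiteness of $\diam_{d_w}(A)$ for finite $A$, but the core idea is identical.
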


Let $(G,w)$ be a weighted graph with a metrizable $w$. Then each
$\rho\in\mathfrak M_w$ is a {\it pseudometric} satisfying
\eqref{e14}. We ask under what conditions does a {\bf metric}
$\rho\in\mathfrak M_w$ exist. To this end it is necessary for the
weight $w:E(G)\to\mathbb R^+$ to be {\it strictly positive } in the
sense that $w(e)>0$ for all $e\in E(G)$. This trivial condition is
also sufficient for the graphs with the vertices of {\it finite
degrees}. Here, as usual, by the degree of a vertex $v$ we
understand the cardinal number of edges incident with $v$. More
generally we have

\begin{corollary}\label{c12}
Let $(G,w)$ be a weighted graph such that each connected component
of $G$ contains at most one vertex of infinite degree. If the weight
$w$ is metrizable and strictly positive, then there is a metric
$\rho\in\mathfrak M_w$.
\end{corollary}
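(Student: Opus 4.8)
The plan is to show that, under the stated hypotheses, the weighted shortest-path pseudometric (in the connected case) and its disconnected-graph variant from the proof of Theorem~\ref{t1} are in fact \emph{metrics}; the corollary then follows immediately. The whole argument rests on one elementary observation: \emph{if $u\in V(G)$ has finite degree, then $d_w(u,v)>0$ for every vertex $v\neq u$ in the component of $u$.} To see this, put $\delta_u:=\min\{w(e):e\in E(G),\ u\in e\}$, the minimum of a finite nonempty set of strictly positive reals, so $\delta_u>0$; every path $F\in\mathcal P_{u,v}$ with $v\neq u$ has at least one edge, and its first edge is incident with $u$, hence $w(F)\geq\delta_u$, and passing to the infimum in \eqref{e3} gives $d_w(u,v)\geq\delta_u>0$.

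Next I would treat the connected case. Since at most one vertex of $G$ has infinite degree, among any two distinct vertices $u,v$ at least one, say $u$, has finite degree, so by the observation above $d_w(u,v)\geq\delta_u>0$. Thus $d_w$ separates points of $V(G)$. As $w$ is metrizable and $G$ is connected, we already know from Theorem~\ref{t1} (the third part of its proof) or from Proposition~\ref{p7} that $d_w\in\mathfrak M_w$; being a point-separating pseudometric, $d_w$ is a metric, which settles the connected case.

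For a disconnected $G$ I would recycle the pseudometric $\rho$ constructed in the proof of Theorem~\ref{t1} via formulas \eqref{e11} and \eqref{e12}, now choosing the constants $a_i$ ($i\in\mathcal I$) with $a_{i_0}=0$ and $a_i>0$ for all $i\neq i_0$. Each restriction $w_i=w|_{E(G_i)}$ is strictly positive and metrizable, and every component $G_i$ inherits the ``at most one vertex of infinite degree'' property, so by the connected case each $d_{w_i}$ is a metric on $V(G_i)$. Exactly as in Theorem~\ref{t1}, the resulting $\rho$ lies in $\mathfrak M_w$; and $\rho$ separates points because on a single component it coincides with $d_{w_i}$, while for $u\in G_i$ and $v\in G_j$ with $i\neq j$ we have $\rho(u,v)\geq a_i+a_j>0$, since at least one of $i,j$ differs from $i_0$. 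Hence $\rho\in\mathfrak M_w$ is a metric.

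I do not anticipate a genuine obstacle: the only step requiring a little care is the disconnected case, where one must use the freedom in the additive constants $a_i$ afforded by the proof of Theorem~\ref{t1} to keep vertices of different components at strictly positive distance, the within-component separation being automatic from the connected case.
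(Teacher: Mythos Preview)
Your proposal is correct and follows essentially the same approach as the paper: in the connected case you use the finite-degree hypothesis to bound $d_w(u,v)$ below by the minimum edge weight at a finite-degree endpoint, and in the disconnected case you reuse formulas \eqref{e11}--\eqref{e12} with strictly positive constants $a_i$ for $i\neq i_0$, exactly as the paper does. The only cosmetic difference is that the paper restricts attention to nonadjacent $u,v$ (the adjacent case being immediate from strict positivity of $w$), whereas you treat all $u\neq v$ uniformly via the bound $w(F)\geq\delta_u$.
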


\begin{proof}
Suppose $G$ is connected and $w$ is strictly positive and
metrizable. Let $\{u,v\}\notin E(G)$. Without loss of generality we
may suppose that the edges of $G$ which are incident with $u$ form
the finite set $\{e_1,\ldots, e_n\}$. Then the inequalities
$$
w(F)>\underset{1\leqslant i\leqslant n}{\min}w(e_i)>0
$$
holds for each path $F\in\mathcal{P}_{u,v}$. Thus $d_w(u,v)>0$ for
every pair of distinct $u,v\in V(G)$. It still remains to note that
$d_w\in\mathfrak M_w$ by Proposition~\ref{p7}.

For the case of disconnected $G$ we can obtain the desirable metric
$\rho\in\mathfrak M_w$ using \eqref{e11} and \eqref{e12} with
strictly positive $a_i,a_j$.
\end{proof}

\begin{figure}[h]
\centering
\includegraphics[keepaspectratio,width=10cm]{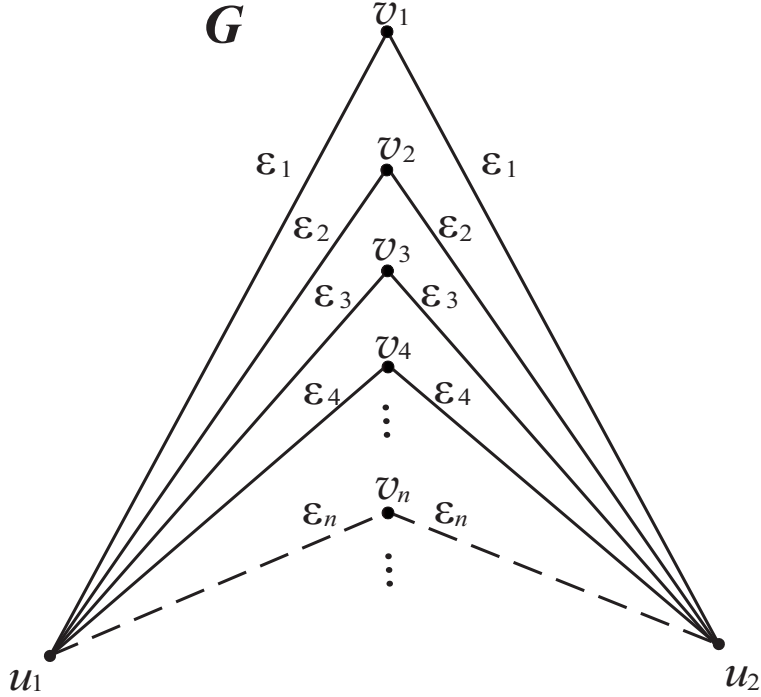}
\caption{The infinite $G$ with metrizable $w$ having no metrics in
$\mathfrak M_w$.}
\end{figure}

\begin{remark}\label{r11}
The main point of the previous proof is the following: If there is
a metric $\rho\in\mathfrak M_w$, then the weighted shortest-path
pseudometric is also a metric.
\end{remark}
The following example shows that the conclusion of Corollary
\ref{c12} is, generally speaking, false for connected graphs $G$,
containing at least two vertices of infinite degree.

\begin{example}\label{ex12}
Let $(G,w)$ be the infinite weighted graph depicted by Fig.~3 where
$\varepsilon_n=w(\{v_n,u_1\})=w(\{v_n,u_2\}),$ are  real numbers
such that
$$
\lim_{n\to\infty}\varepsilon_n=0
$$
and $\varepsilon_n>\varepsilon_{n+1}>0$ for each $n\in\mathbb N$.
Each cycle $C$ of $G$ is a quadrilateral of the form
$u_1,v_n,u_2,v_m,u_1$ with $n\ne m$. Since $C$ has two distinct
edges of the maximal weight, inequality \eqref{e5} holds. By
Theorem~\ref{t1} it means that $w$ is metrizable. Letting
$n,m\to\infty$ and using formula \eqref{e3} we obtain
$d_w(u_1,u_2)=0$. Consequently $d_w$ is a pseudometric so, in
accordance with Remark~\ref{r11}, there are no metrics in $\mathfrak
M_w$.
\end{example}
To describe the characteristic structural properties of graphs $G$
having metrics in $\mathfrak M_w$ for each metrizable
$w:H(G)\to\mathbb R^+$ we recall some definitions.

Given an infinite sequence $\{A_n\}_{n\in\mathbb N}$ of sets, we
call the {\it upper limit} of this sequence,
$\limsup_{n\to\infty}A_n$, the set of all elements $a$ such that
$a\in A_n$ holds for an infinity of values of the index $n$. We
have
\begin{equation}\label{x*-1}
\limsup_{n\to\infty}A_n=\bigcap_{k=1}^\infty\bigg(\bigcup_{n=1}^\infty
A_{n+k}\bigg).
\end{equation}

Let $G$ be a graph. A set $F$ of vertices of $G$ is {\it
independent} if every two vertices in $F$ are nonadjacent.

\begin{theorem}\label{t13}
Let $G=(V,E)$ be an infinite connected graph. The following two
conditions are equivalent.
\begin{itemize}
\item[\rm(i)] There is a strictly positive metrizable weight $w$
such that each $\rho\in\mathfrak M_w$ is not  metric but
pseudometric only.

\item[\rm(ii)] There are two  vertices $u^*,v^*\in V(G)$ and a
sequence $\tilde{F}$ of paths $F_j,\ j\in\mathbb N$, joining $u^*$
and $v^*$ such that the upper limit of the sequence
$\{V(F_j)\}_{j\in\mathbb N}$ is an independent set.
\end{itemize}
\end{theorem}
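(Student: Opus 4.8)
The plan is to prove the two implications separately, reducing everything to the metrizability criterion (iii) of Theorem~\ref{t1} and to Remark~\ref{r11} (``if $\mathfrak M_w$ contains a metric, then $d_w$ itself is a metric'').

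\textbf{(ii)$\Rightarrow$(i).} Assume we have vertices $u^*,v^*$ and paths $F_j$ joining them with $\limsup_j V(F_j)$ independent. The idea is to build a strictly positive weight $w$ on $E(G)$ for which $d_w(u^*,v^*)=0$; then $d_w$ is not a metric, and by Remark~\ref{r11} no element of $\mathfrak M_w$ is a metric, while Proposition~\ref{p7} guarantees $\mathfrak M_w\neq\emptyset$ (indeed $d_w\in\mathfrak M_w$ once we check $w$ is metrizable). First I would fix a spanning tree $T$ of $G$ and assign weight $1$ (or any fixed positive value) to all edges of $T$; the edges of $T$ lie on no cycle, so they impose no constraint via (iii). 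The remaining (non-tree) edges are exactly those that can sit on cycles; along the paths $F_j$ I want the weights to tend to $0$ so that $w(F_j)\to 0$. The technical heart is to choose, for each non-tree edge $e$, a value $w(e)>0$ small enough that (a) $\sum_{e\in E(F_j)} w(e)\to 0$ as $j\to\infty$, and (b) for every cycle $C$, $2\max_{e\in E(C)}w(e)\le w(C)$. For (b) the point is that a cycle containing a tree edge automatically satisfies (iii) provided the non-tree weights are small relative to $1$; and a cycle consisting entirely of non-tree edges has all its edges on cycles, but one can still enforce (iii) by making each non-tree edge weight a very rapidly decaying function of, say, the index of the first path $F_j$ on which it appears, together with a local finiteness bookkeeping. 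This is the step I expect to be delicate: one must simultaneously push the path-weights to zero and keep the ``cyclic triangle inequality'' valid, and this is precisely where the independence of $\limsup_j V(F_j)$ is used — it ensures that any vertex, hence (combined with simplicity) any fixed edge, lies on only finitely many of the $F_j$, so the decay in (a) is not obstructed by an edge being forced to carry a fixed positive weight on infinitely many paths.

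\textbf{(i)$\Rightarrow$(ii).} Conversely, suppose $w$ is strictly positive, metrizable, and every $\rho\in\mathfrak M_w$ is only a pseudometric. By Remark~\ref{r11}, $d_w$ is not a metric, so there exist distinct $u^*,v^*$ with $d_w(u^*,v^*)=0$. By the definition \eqref{e3} of $d_w$ as an infimum over $\mathcal P_{u^*,v^*}$, there is a sequence of paths $F_j$ joining $u^*$ and $v^*$ with $w(F_j)\to 0$. It remains to show one can choose such a sequence so that $A:=\limsup_j V(F_j)$ is independent. The argument: if $\{x,y\}\in E(G)$ with both $x,y\in A$, then $x$ and $y$ each lie on infinitely many $F_j$; since $w(F_j)\to 0$ and $w$ is strictly positive, the total weight along $F_j$ forces $w(\{x,y\})\le w(F_j)\to 0$ only if the edge $\{x,y\}$ is not traversed — but more carefully, we use that $d_w(x,y)=0$ would follow, contradicting $w(\{x,y\})=d_w(x,y)>0$ (metrizability plus strict positivity). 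The cleanest route is: pass to a subsequence so that the vertex sets $V(F_j)$ form an increasing-enough family, and argue that any edge with both endpoints in the $\limsup$ would let us splice pieces of the $F_j$ together to get a path of arbitrarily small weight between its endpoints, forcing $d_w$ to vanish on that edge.

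\textbf{Main obstacle.} The genuine difficulty is the construction in (ii)$\Rightarrow$(i): producing one weight $w$ that is globally metrizable (cyclic triangle inequality on \emph{every} cycle, including cycles far from the $F_j$ and cycles built entirely out of small-weight edges) while still forcing $d_w(u^*,v^*)=0$. Everything else is either an application of Theorem~\ref{t1}(iii), of Proposition~\ref{p7}, or of the ``$d_w$ must be a metric'' observation in Remark~\ref{r11}, together with elementary manipulation of the $\limsup$ formula \eqref{x*-1}.
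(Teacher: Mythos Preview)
Your sketch for (i)$\Rightarrow$(ii) is essentially the paper's argument: from $d_w(u^*,v^*)=0$ pick paths $F_j$ with $w(F_j)\to 0$ (after thinning so that the tail sums $\sum_{k>j}w(F_k)\to 0$), and observe that if an edge $\{u^0,v^0\}$ had both ends in the $\limsup$, then for every $i$ there is a $u^0v^0$-walk inside $\bigcup_{k>i}F_k$, hence a path there of weight at most $\sum_{k>i}w(F_k)\to 0$, contradicting $w(\{u^0,v^0\})=d_w(u^0,v^0)>0$. One slip: independence of the $\limsup$ does \emph{not} say that every vertex lies on only finitely many $F_j$ (indeed $u^*,v^*$ lie on all of them); it says that for every edge at least one endpoint does, which is exactly what you need.

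The genuine gap is in (ii)$\Rightarrow$(i). Your spanning-tree plan breaks at the first step: edges of a spanning tree $T$ of $G$ are not bridges of $G$ and \emph{do} lie on cycles of $G$; they merely lie on no cycle of $T$. Worse, the scheme ``tree edges get weight $1$, non-tree edges get tiny weights'' typically violates condition (iii) of Theorem~\ref{t1}: a cycle with a single tree edge of weight $1$ and the remaining edges of total weight $\varepsilon$ has $2\max_{e\in C}w(e)=2>1+\varepsilon=w(C)$. So the construction is not metrizable, and there is no evident repair along these lines.

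The paper proceeds quite differently. It first refines the sequence $\{F_j\}$ (Lemma~\ref{l13*}) so that the $E(F_j)$ are pairwise disjoint and, crucially, every cycle contained in $\tilde G:=\bigcup_j F_j$ meets the \emph{first} $F_j$ it touches in at least two edges. One then puts a weight $w_1$ on $E(\tilde G)$ that is constant on each $F_j$, with these constants decreasing; condition (ii$_3$) guarantees every cycle in $\tilde G$ has two edges of maximal weight, so \eqref{e5} holds and $w_1$ is metrizable on $\tilde G$ with $d_{w_1}(u^*,v^*)=0$. Finally one extends to all of $G$ not via a spanning tree but by setting $w(\{u,v\}):=d_{w_1}(u,v)$ for edges with both ends in $V(\tilde G)$ (and a simple explicit pseudometric for vertices outside $\tilde G$); being the restriction of a pseudometric, this $w$ is automatically metrizable, and \eqref{e27} shows it is strictly positive. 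The step you flagged as the ``main obstacle'' is thus handled by working first on the subgraph $\tilde G$ and then transporting metrizability to $G$ through $d_{w_1}$, rather than by trying to control all cycles of $G$ directly.
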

\begin{remark}\label{r3.11}
It is clear that the relations
$$
u^*,v^*\in\limsup_{n\to\infty}V(F_n)
$$
hold for each sequence $\tilde F=\{F_j\}_{j\in\mathbb N}$ joining
$u^*$ and $v^*$. Hence vertices $u^*$ and $v^*$ are nonadjacent if
condition $(ii)$ of Theorem \ref{t13} holds.
\end{remark}
Using \eqref{x*-1} we can reformulate the last part of $(ii)$ in the
form:
\begin{itemize}\it
\item[($ii_1$)] for every $e^0\in E(G)$ there are $u^0\in e^0$ and
$i=i(e^0)$ such that
\begin{equation}\label{e18}
u^0\not\in\bigcup_{k=1}^\infty V(F_{i+k}).
\end{equation}
\end{itemize}

\begin{lemma}\label{l13*}
Let $G$ be an infinite  connected graph, let $u^*$ and $v^*$ be
two distinct nonadjacent vertices of $G$ and let
$\tilde{F}=\{F_j\}_{j\in\mathbb{N}}$ be a sequence of paths
joining $u^*$ and $v^*$ such that $(ii_1)$ holds. Then there is a
subsequence $\{F_{j_k}\}_{k\in\mathbb{N}}$ of $\tilde{F}$ such
that:

$(ii_2)$ the equality
\begin{equation}\label{e3.11}
E(F_{j_l})\cap E(F_{j_k})=\emptyset
\end{equation}
holds whenever $l\ne k$;

$(ii_3)$ if a cycle $C$ is contained in the graph
$\underset{k\in\mathbb{N}}{\bigcup}F_{j_k}$,
\begin{equation}\label{e19}
C\subseteq\underset{k\in\mathbb{N}}{\bigcup}F_{j_k},
\end{equation}
and
\begin{equation}\label{e20*}
k_0=k_0(C):=\min\{k\in\mathbb{N}: E(C)\cap
E(F_{j_k})\neq\emptyset\},
\end{equation}
then $C$ and $F_{j_{k_0}}$ have at least two common edges.
\end{lemma}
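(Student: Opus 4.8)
The plan is to build $\{F_{j_k}\}_{k\in\mathbb N}$ by a greedy induction in which every newly chosen path is forced to omit a distinguished endpoint of \emph{every} edge used so far. First I would restate hypothesis $(ii_1)$ in the convenient form: for each $e\in E(G)$ fix once and for all an endpoint $u^0(e)\in e$ and an index $i(e)$ with $u^0(e)\notin V(F_n)$ for all $n>i(e)$; note this automatically gives $e\notin E(F_n)$ for $n>i(e)$ as well, since an edge can lie on a path only when both of its endpoints are vertices of that path. (In particular, since every $F_n$ contains $u^*$ and $v^*$, the edge $\{u^*,v^*\}$ cannot exist, which is the nonadjacency already assumed in the statement.)

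Next, set $j_1:=1$ and, having chosen $j_1<\dots<j_{k-1}$, put $N_k:=1+\max\{\,i(e):e\in\bigcup_{l=1}^{k-1}E(F_{j_l})\,\}$ — a legitimate finite maximum because each path $F_{j_l}$ has finitely many edges — and choose any $j_k>\max\{j_{k-1},N_k\}$. By the choice of $N_k$ we then have $u^0(e)\notin V(F_{j_k})$, hence $e\notin E(F_{j_k})$, for every edge $e$ that lies on some $F_{j_l}$ with $l<k$; since this holds at every stage, the paths $F_{j_1},F_{j_2},\dots$ are pairwise edge-disjoint, which is exactly $(ii_2)$.

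To obtain $(ii_3)$, let $C\subseteq\bigcup_{k}F_{j_k}$ be a cycle and $k_0=k_0(C)$, and suppose for contradiction that $C$ and $F_{j_{k_0}}$ have a unique common edge $e=\{a,b\}$. Since $C$ is a cycle, $C-e$ is a path from $a$ to $b$; by minimality of $k_0$ every edge of $C-e$ lies in $\bigcup_{k>k_0}E(F_{j_k})$, and because a cycle of a simple graph has at least three edges, both $a$ and $b$ are endpoints of edges of $C-e$ and therefore lie in $V(F_{j_k})$ for suitable indices $k>k_0$. On the other hand $e\in E(F_{j_{k_0}})$ belongs to $\bigcup_{l=1}^{k-1}E(F_{j_l})$ for every $k>k_0$, so the construction gives $u^0(e)\notin V(F_{j_k})$ for all such $k$ — which is impossible, since $u^0(e)\in\{a,b\}$. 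Hence $C$ and $F_{j_{k_0}}$ share at least two edges.

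I expect no serious obstacle here; the only real decision is picking the right invariant to carry along the induction. Tracking merely edge-disjointness would secure $(ii_2)$ but leave $(ii_3)$ unreachable, so the key point is to insist instead that each new path avoid a \emph{fixed endpoint} of every previously used edge — after that both conclusions fall out as above, and the verification is routine.
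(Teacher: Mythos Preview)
Your argument is correct and follows essentially the same route as the paper's proof: use $(ii_1)$ to choose each new path so that a fixed endpoint of every previously used edge is absent from it, and then derive $(ii_3)$ by the same contradiction (both endpoints of the unique common edge would have to appear in later paths). The only difference is cosmetic: the paper does this in two passes---first thinning to achieve edge-disjointness, then thinning again with the vertex-avoidance condition---whereas you fold both into a single induction by taking $N_k$ over \emph{all} earlier paths rather than just the last one; your version is a clean streamlining of the same idea.
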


\begin{proof}
For every $e\in E(G)$ define a set
$$
N(e):=\{j\in \mathbb N:E(F_j)\ni e\}.
$$
Condition $(ii_1)$ implies that $N(e)$ is finite for each $e\in
E(G)$. Now we construct a subsequence $\{F_{j_k}\}_{k\in\mathbb N}$
by induction. Write $j_1:=1$ and
$$
M_1:=\bigcup_{e\in F_{j_1}}N(e).
$$
Since all $N(e)$ are finite, the set $M_1\subseteq\mathbb N$ is also
finite. Let $j_2$ be the least natural number in the set $\mathbb
N\setminus M_1$. Write
$$
M_2:=\bigcup_{e\in F_{j_2}}N(e),\quad j_3=\min\{m:m\in\mathbb
N\setminus(M_1\cup M_2)\};
$$
$$
M_3:=\bigcup_{e\in F_{j_3}}N(e),\quad j_4=\min\{m:m\in\mathbb
N\setminus(M_1\cup M_2\cup M_3)\}
$$
and so on. It is plain to show that \eqref{e3.11} holds for
distinct $j_k$ and $j_e$. Thus the subsequence
$\{F_{j_k}\}_{k\in\mathbb N}$ satisfies $(ii_2)$. To construct a
subsequence of $\tilde F$ which satisfies simultaneously $(ii_2)$
and $(ii_3)$, note that condition $(ii_1)$ remains valid when one
passes from the sequence $\tilde F$ to any of its subsequences.
Hence, without loss of generality, we may assume that
$\{F_{j_k}\}_{k\in\mathbb N}=\tilde F$.

To define a new subsequence $\{F_{j_k}\}_{k\in\mathbb{N}}$ we again
use the induction. Put $j_1:=1$. Suppose $j_k$ are defined for
$k=1,\ldots,n-1$. By (ii$_1$) for every $e\in F_{j_{n-1}}$ there are
$i(e)\in\mathbb{N}$ and $u\in e$ such that
\begin{equation}\label{e22.0}
u\notin\underset{k=1}{\overset{\infty}{\bigcup}}V(F_{i(e)+k}).
\end{equation}
Define
\begin{equation}\label{e22.1}
j_n:=1+\underset{e\in F_{j_{n-1}}}{\max}i(e).
\end{equation}
Note that $j_n>j_{n-1}$.

Suppose that a cycle $C$ is contained in
$\underset{k\in\mathbb{N}}{\bigcup}F_{j_k}$ where
$\{F_{j_k}\}_{k\in\mathbb{N}}$ is above constructed subsequence of
$\tilde{F}$ and $k_0=k_0(C)$ is defined by \eqref{e20*} but
$F_{j_{k_0}}$ contains the unique edge $e=\{u,v\}$ from $E(C)$. Let
$e_1,e_2\in E(C)$ be the distinct edges which are adjacent to
$e$. The uniqueness of $e$, \eqref{e19} and \eqref{e20*} imply the
relations
\begin{equation}\label{e19*}
e_1\in\underset{k>k_0}{\bigcup}E(F_{j_k})\qquad\text{and} \qquad
e_2\in\underset{k>k_0}{\bigcup}E(F_{j_k}).
\end{equation}
If $e_1=\{u_1,v_1\}$ and $e_2=\{u_2,v_2\}$, then
$$
u\in\{u_1,v_1,u_2,v_2\}\qquad\text{and}\qquad
v\in\{u_1,v_1,u_2,v_2\}
$$
so that from \eqref{e19*} we obtain
$$
u\in\underset{k>k_0}{\bigcup}V(F_{j_k})\qquad\text{and}\qquad
v\in\underset{k>k_0}{\bigcup}V(F_{j_k})
$$
contrary to \eqref{e22.0} and \eqref{e22.1}.
\end{proof}

\begin{proof}[Proof of Theorem \ref{t13}.]
{\bf (i)$\Rightarrow$(ii)} Let $w$ be a strictly positive metrizable
weight such that each $\rho\in\mathfrak M_w$ is a pseudometric only.
Hence $d_w$ is not
 metric, so for some distinct $u^*,v^*\in V(G)$ we have
\begin{equation}\label{e20**}
d_w(u^*,v^*)=0.
\end{equation}
From the definition of $d_w$ it follows at once that there is a
sequence $\tilde F=\{F_i\}_{i\in\mathbb N}$, $F_i\in\mathcal
P_{u^*,v^*}$, $i\in\mathbb N$, such that
\begin{equation}\label{e21}
\lim_{i\to\infty}w(F_i)=0.
\end{equation}
Since  all $F_i$ are finite, we may suppose, taking a subsequence of
$\tilde F$ if it is necessary, that
\begin{equation}\label{e22}
\min_{e\in E(F_j)}w(e)>\sum_{i=1}^\infty w(F_{i+j})
\end{equation}
for all $j\in\mathbb N$. We claim that condition $(ii_1)$ is
fulfilled by $\tilde F$.

Assume there is $e^0=\{u^0,v^0\}\in E(G)$ such that
$$
u^0,v^0\in\bigcup_{k=1}^\infty V(F_{i+k})
$$
for each $i\in\mathbb N$. Since all $F_i$ are paths joining $ u^*$
and $v^*$, there is an $u^0v^0$-walk in the graph
\begin{equation}\label{e24*}
\tilde G_i:=\bigcup_{k=1}^\infty F_{i+k}.
\end{equation}
It is well known that if there is an $xy$-walk in a graph, then
there is also a path joining $x$ and $y$ in the same graph, see, for
example \cite[p.~82]{BM}. Let $P_i$ be a path joining  $u^0$ and
$v^0$ in $\tilde G_i$. The weight $w$ is metrizable. Consequently we
may use Theorem~\ref{t1}. Equalities \eqref{e3}--\eqref{e4} imply
$$
w(\{u^0,v^0\})\leq w(P_i)\leq \sum_{k=1}^\infty w(F_{i+k}).
$$
Letting $i\to\infty$ and using \eqref{e21}, \eqref{e22} we obtain
$$
w(\{u^0,v^0\})\leq\lim_{i\to\infty}\sum_{k=1}^\infty w(F_{i+k})=0,
$$
contrary to the condition: $w(e)>0$ for all $e\in E(G)$.

{\bf(ii)$\Rightarrow$(i)} Let $G$ be a graph satisfying condition
(ii). In view of Lemma \ref{l13*} we may suppose that (ii$_2$) and
(ii$_3$) are also fulfilled with
$\{F_{j_k}\}_{k\in\mathbb{N}}=\tilde{F}$ where $\tilde{F}$ is the
sequence of paths appearing in (ii). To verify condition (i) it
suffices, by Proposition~\ref{p7}, to find a metrizable weight
$w:V(G)\to\mathbb R^+$ such that $w(e)>0$ for all $e\in E(G)$ and
$$
d_w(u^*,v^*)=0
$$
for some distinct vertices $u^*$ and $v^*$.

In the rest of the proof we will construct the desired weight $w$.

Let us consider the graph
$$
\tilde{G}=\underset{i\in\mathbb{N}}{\bigcup}F_i,
$$
cf. \eqref{e24*}. Denote by $m(F_i), \ i\in\mathbb N$, the number of
edges of $F_i$. Let $\{\varepsilon_i\}_{i\in\mathbb
 N}$ be a decreasing sequence of  positive real numbers such that
$\lim_{i\to\infty}\varepsilon_i=0$ and that the sequence
$\{\frac{\varepsilon_i}{m(F_i)}\}_{i\in\mathbb N}$ is also
decreasing. Define a weight $w_1:E(\tilde G)\to\mathbb R^+$ as
\begin{equation}\label{e26}
w_1(e):=\frac{\varepsilon_i}{m(F_i)},\quad\text{if } e\in E(F_i).
\end{equation}
This definition is correct because, by (ii$_2$), the edge sets
$E(F_i)$ and $E(F_j)$ are disjoint for distinct $i$ and $j$.

Let us note that the weight $w_1$ is metrizable. It follows from
Theorem~\ref{t1} because (ii$_3$), \eqref{e26} and the decrease of
the sequence $\{\frac{\varepsilon_i}{m(F_i)}\}_{i\in\mathbb N}$
imply inequality \eqref{e5} for every cycle $C$ in $\tilde G$. (As
has been stated above, \eqref{e5} holds if there are two distinct
edges $e_1,e_2$ in $C$ such that $w(e_1)=w(e_2)=\max_{e\in
E(C)}w(e)$. To find these $e_1$ and $e_2$ we can use (ii$_3$).)

Let $d_{w_1}$ be the weighted shortest-path pseudometric generated
by the weight $w_1$. The condition
$\lim_{i\to\infty}\varepsilon_i=0$ implies $d_{w_1}(u^*,v^*)=0$.
Indeed, we have
\begin{multline*}
d_{w_1}(u^*,v^*)\leq\inf_{i\in\mathbb
N}w_1(F_i)\leq\varlimsup_{i\to\infty}\sum_{e\in E(F_i)}w_1(e)\\
=\lim_{i\to\infty}m(F_i)\frac{\varepsilon_i}{m(F_i)}=\lim_{i\to\infty}\varepsilon_i=0.
\end{multline*}

Let $e^0=\{u^0,v^0\}\in E(G)$ with $u^0,v^0 \in V(\tilde G)$. We
will use (ii$_2$) to prove the inequality
\begin{equation}\label{e27}
d_{w_1}(u^0,v^0)>0.
\end{equation}
Condition (ii$_1$) implies at least one from the relations
$$
u^0\notin\underset{k=1}{\overset{\infty}{\bigcup}}V(F_{i+k})\qquad,\qquad
v^0\notin\underset{k=1}{\overset{\infty}{\bigcup}}V(F_{i+k})
$$
for sufficiently large $i$. Suppose, for instance, that there is
$i_0=i_0(e_0)\in\mathbb N$ such that
\begin{equation}\label{e28}
u^0\not\in V(F_i)\quad \text{if }i>i_0.
\end{equation}
Let $F$ be an arbitrary path in $\tilde G$ joining  $u^0$ and
$v^0$ and let $e\in E(F)$ be the edge incident with the end $u^0$.
From \eqref{e28} follows
$$
e\in\bigcup_{i=1}^{i_0}E(F_i).
$$
Using this relation, \eqref{e26} and the decrease of the sequence
$\{\frac{\varepsilon_i}{m(F_i)}\}_{i\in\mathbb N}$ we obtain
$$
d_{w_1}(u^0,v^0)\geq\frac{\varepsilon_{i_0}}{m(F_{i_0})}>0,
$$
so that \eqref{e27} holds.

Write
$$
V':=V(G)\setminus V(\tilde G).
$$
If $V'=\emptyset$, then the desirable strictly positive weight
$w:E(G)\to\mathbb R^+$ can be obtained as
$$
w(\{u,v\}):=d_{w_1}(u,v),\quad \{u,v\}\in E(G)
$$
because as was shown above $d_{w_1}(u,v)>0$ for each $\{u,v\}\in
E(G)$. The weight $w$ is metrizable because it is a ``restriction''
of the pseudometric $d_{w_1}$. It is easy to show also that
\begin{equation}\label{e29}
d_{w_1}(u,v)=d_w(u,v)
\end{equation}
for all $u,v\in V(G)$. Indeed, since $d_{w_1}\in\mathfrak M_{w}$ the
inequality
$$
d_{w_1}(u,v)\leq d_w(u,v)
$$
follows from Proposition \ref{p7}. To prove the converse inequality
note that
$$
\mathcal P_{u,v}(\tilde G)\subseteq \mathcal P_{u,v}(G)
$$
because $\tilde G$ is a subgraph of $G$. Consequently,
\begin{multline}\label{e29*}
d_w(u,v)=\inf\{w(F):F\in\mathcal
P_{u,v}(G)\}\leq\inf\{w(F):F\in\mathcal P_{u,v}(\tilde G)\}\\
=\inf\{w_1(F):F\in\mathcal P_{u,v}(\tilde G)\}=d_{w_1}(u,v).
\end{multline}
Equality \eqref{e29} implies, in particular, that
$d_w(u^*,v^*)=0$.

Let us consider now the case where
$$
V'=V(G)\setminus V(\tilde G)\ne\emptyset.
$$
Let $v'$ be a fixed point of the set $V'$. Define a distance
function $d'$ on the set $V(G)$ as $d'(v,v)=0$ for all $v\in V(G)$
and as
\begin{equation}\label{e30}
d'(u,v):=\begin{cases} 1&\text{if }u=v^*,\ v=v'\\
1&\text{if }u,v\in V',\ u\ne v\\
d_{w_1}(u,v)&\text{if }u,v\in V(\tilde G)\\
d_{w_1}(u,v^*)+1&\text{if }u\in V(\tilde G),\ v=v'\\
2&\text{if }u=v^*,\ v\in V', \ v\ne v'\\
d_{w_1}(u,v^*)+2&\text{if }u\in V(\tilde G),\ v\in V',\ u\ne u^*,\
v\ne v'.
\end{cases}
\end{equation}
Note that the past three lines in the right side of \eqref{e30} can
be rewritten in the form:
\begin{equation}\label{e30*}
d'(u,v)=d'(u,v^*)+d'(v^*,v')+d'(v',v)
\end{equation}
if $u\in V(\tilde G)$ and $v\in V'$. The last equality and
\eqref{e30} imply
 that $d'$ is a pseudometric. Writing
$$
w(\{u,v\})=d'(u,v)
$$
for all $\{u,v\}\in E(G)$ we obtain the weighted graph $(G,w)$ with
$d'\in\mathfrak M_w$. The weight $w$ is strictly positive since, by
\eqref{e30}, $d'(u,v)\geq1$ if $u\ne v$ and $\{u,v\}\cap
V'\ne\emptyset$ and, by \eqref{e27}, $d'(u,v)>0$ if $u\ne v$ and
$u,v\in V(\tilde G)$, and $\{u,v\}\in E(G)$.

Proposition \ref{p7} implies that
$$
d_w(u,v)=w(\{u,v\})
$$
for each $\{u,v\}\in E(G)$. To complete the proof, it suffices to
observe that $d_w(u^*,v^*)=0$. To see this we can use \eqref{e29*}
with $u=u^*$ and $v=v^*$.
\end{proof}

For the case of disconnected graphs $G$ we have the following
\begin{proposition}\label{p14}
Let $(G,w)$ be a disconnected weighted graph with the strictly
positive metrizable $w$. Then there is a pseudometric
$\rho\in\mathfrak M_w$ which is not  metric. Moreover let $G_i$ be
connected components of $G$ and let $w_i$ be the restrictions of the
weight $w$ on the sets $E(G_i)$. Then the following statements are
equivalent.
\begin{itemize}
\item[\rm(i)] There exists a metric in $\mathfrak M_w$.

\item[\rm(ii)] The shortest-path pseudometrics $d_{w_i}$ are
metrics for all $G_i$.
\end{itemize}
\end{proposition}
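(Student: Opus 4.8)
The plan is to dispatch the three assertions in turn, in each case reducing to the disjoint-component construction \eqref{e11}--\eqref{e12} from the proof of Theorem~\ref{t1} and to the maximality of $d_w$ provided by Proposition~\ref{p7}. For the first assertion, since $G$ is disconnected it has two distinct components, say $G_1$ and $G_2$; pick $u\in V(G_1)$ and $v\in V(G_2)$. I would run the construction \eqref{e11}--\eqref{e12} with the component representatives chosen so that $v_1^*=u$ and $v_2^*=v$ (and the remaining $v_i^*$ arbitrary), taking every constant $a_i$ equal to $0$. By the last lines of the proof of Theorem~\ref{t1} the resulting function $\rho$ is a pseudometric lying in $\mathfrak M_w$, and \eqref{e12} gives $\rho(u,v)=a_1+a_2+d_{w_1}(u,v_1^*)+d_{w_2}(v,v_2^*)=0$ although $u\ne v$; hence $\rho$ is not a metric. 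This step uses neither (i) nor (ii).

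For (i)$\Rightarrow$(ii), let $\rho\in\mathfrak M_w$ be a metric and fix a component $G_i$. If $G_i$ consists of a single vertex there is nothing to prove, so assume $G_i$ is nonempty. The restriction $\rho|_{V(G_i)}$ is a metric on $V(G_i)$ that agrees with $w$, and hence with $w_i$, on $E(G_i)$, so $\rho|_{V(G_i)}\in\mathfrak M_{w_i}$; in particular $w_i$ is metrizable. Proposition~\ref{p7}, applied to the connected graph $G_i$, then yields $d_{w_i}\in\mathfrak M_{w_i}$ together with $\rho|_{V(G_i)}\leqslant d_{w_i}$, so that $d_{w_i}(x,y)\geqslant\rho(x,y)>0$ for all distinct $x,y\in V(G_i)$; thus $d_{w_i}$ is a metric. (This is precisely the mechanism of Remark~\ref{r11}, now applied componentwise.)

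For (ii)$\Rightarrow$(i), I would again use \eqref{e11}--\eqref{e12}, this time with $a_{i_0}=0$ for the distinguished index $i_0$ and $a_i:=1$ for all $i\ne i_0$. By the proof of Theorem~\ref{t1} the resulting $\rho$ is a pseudometric in $\mathfrak M_w$, so it only remains to check that $\rho(u,v)>0$ whenever $u\ne v$. If $u$ and $v$ lie in the same component $G_i$, then $\rho(u,v)=d_{w_i}(u,v)>0$ by hypothesis (ii); if $u\in V(G_i)$ and $v\in V(G_j)$ with $i\ne j$, then $\rho(u,v)\geqslant a_i+a_j$, and since at most one of $i$ and $j$ equals $i_0$, at least one of $a_i,a_j$ equals $1$, whence $\rho(u,v)>0$. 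Therefore $\rho$ is a metric in $\mathfrak M_w$. None of these steps is a real obstacle; the only points demanding a little care are excluding single-vertex components before invoking Proposition~\ref{p7} (which is stated for nonempty graphs) and the short case analysis for strict positivity in the last part.
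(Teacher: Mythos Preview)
Your proof is correct and follows essentially the same approach as the paper's, using the disjoint-component construction \eqref{e11}--\eqref{e12} together with Proposition~\ref{p7}. Your treatment is in fact more complete: the paper's very terse proof handles the first assertion (setting a single $a_i=0$ for $i\ne i_0$ and invoking \eqref{e12*}) and (ii)$\Rightarrow$(i) (choosing all $a_i>0$ for $i\ne i_0$) but leaves (i)$\Rightarrow$(ii) implicit, whereas you spell it out via the componentwise maximality argument of Remark~\ref{r11}.
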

\begin{proof}
Set in \eqref{e12} $a_i=0$ for some $i\ne i_0$. Then, by
\eqref{e12*}, $\rho$ is not a metric. If all $d_{w_i}$ are metrics,
then to obtain a metric in $\mathfrak M_w$ it is sufficient to put
$a_i>0$ for all $i\ne i_0$.
\end{proof}

\section{The least element in $\mathfrak M_w$}

We wish characterize the structure of the graphs $G$ for which the
set $\mathfrak M_w$ contains the least pseudometric $\rho_{0,w}$ as
soon as $w$ is metrizable. To this end, we recall the definition of
$k$-partite graph.

\begin{definition}\label{d19}
Let $G$ be a simple graph and let $k$ be a cardinal number. The
graph $G$ is $k$-partite if the vertex set $V(G)$ can be partitioned
into $k$ nonvoid disjoint subsets, or parts, in such a way that no
edge has both ends in the same part. A $k$-partite graph is complete
if any two vertices in different parts are adjacent.
\end{definition}

We shall say that $G$ is a complete multipartite graph if there is a
cardinal number $k\geqslant 1$ such that $G$ is complete
$k$-partite, cf. \cite[p. 14]{Di}.

\begin{remark}\label{r20}
It is easy to prove that each nonempty complete $k$-partite graph
$G$ is connected if $k\geqslant 2$. Each 1-partite graph $G$ is an
empty graph.
\end{remark}

\begin{theorem}\label{t21}
The following conditions are equivalent for each nonempty graph $G$.
\begin{itemize}
\item[\rm(i)] For every metrizable weight
$w:E(G)\to\mathbb{R}^+$ the poset $(\mathfrak{M}_w,\leqslant)$
contains the least pseudometric $\rho_{0,w}$, i.e., the inequality
\begin{equation}\label{e33*}
\rho_{0,w}(u,v)\leqslant\rho(u,v)
\end{equation}
holds for all $\rho\in\mathfrak{M}_w$ and all $u,v\in V(G)$;

\item[\rm(ii)] $G$ is a complete $k$-partite graph with $k\geqslant 2$.
\end{itemize}

If condition (ii) holds and $w$ is a metrizable weight, then for
each pair of distinct nonadjacent vertices $u,v$ we have
\begin{equation}\label{e53}
d_w(u,v)=\underset{
{\substack{\alpha\neq\alpha_0,\\\alpha\in\mathcal{I}}}}{\inf}\,
\underset{p\in X_{\alpha}}{\inf}\bigg(w(\{u,p\})+w(\{p,v\})\bigg),
\end{equation}
and
\begin{equation}\label{e54}
\rho_{0,w}(u,v)=\underset{{\substack{\alpha\neq\alpha_0,\\\alpha\in\mathcal{I}}}}
{\sup}\, \underset{p\in X_{\alpha}}{\sup}|w(\{u,p\})-w(\{p,v\})|
\end{equation}
where $\{X_{\alpha}:\alpha\in\mathcal{I}\}$ is a partition of $G$
and $\alpha_0\in\mathcal{I}$ is the index such that $u,v\in
X_{\alpha_0}$.
\end{theorem}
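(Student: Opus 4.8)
The plan is to prove the two implications of the equivalence separately, and then establish the two explicit formulas under hypothesis (ii).

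For the implication (ii)$\Rightarrow$(i), suppose $G$ is complete $k$-partite with partition $\{X_\alpha:\alpha\in\mathcal I\}$, $k\geqslant 2$, and $w$ is a metrizable weight. I would define a function $\rho_{0,w}$ on $V\times V$ by setting $\rho_{0,w}(u,v)=w(\{u,v\})$ when $\{u,v\}\in E(G)$, $\rho_{0,w}(u,u)=0$, and, for distinct nonadjacent $u,v$ (so $u,v\in X_{\alpha_0}$ for some $\alpha_0$), by the right-hand side of \eqref{e54}. The first task is to check that this quantity is finite and that $\rho_{0,w}$ is a pseudometric; here the key observation is that for any $\rho\in\mathfrak M_w$ and any $p$ in a part other than $X_{\alpha_0}$, the reverse triangle inequality gives $|w(\{u,p\})-w(\{p,v\})|=|\rho(u,p)-\rho(p,v)|\leqslant\rho(u,v)$, so that taking the supremum over all such $p$ yields $\rho_{0,w}(u,v)\leqslant\rho(u,v)$; in particular $\rho_{0,w}(u,v)\leqslant d_w(u,v)<\infty$ by Proposition~\ref{p7}. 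This simultaneously proves the lower-bound property \eqref{e33*}, once we know $\rho_{0,w}\in\mathfrak M_w$. The triangle inequality for $\rho_{0,w}$ is the main computational point: given $u,v,z$, one distinguishes cases according to which of the pairs are adjacent. When all three lie in the same part it reduces to comparing suprema of $|w(\{u,p\})-w(\{p,v\})|$ and exploiting the triangle inequality for the pseudometric built from a fixed auxiliary $\rho$, or more directly: for a witness $p$ nearly realizing $\rho_{0,w}(u,v)$, one shows $|w(\{u,p\})-w(\{p,v\})|\leqslant |w(\{u,p\})-w(\{p,z\})|+|w(\{z,p\})-w(\{p,v\})|$ only if $z$ is not in the part of $p$; if $z$ shares a part with $p$ one must instead route through some vertex of a third part, which exists precisely because $k\geqslant 2$ forces a vertex outside $X_{\alpha_0}$ and outside the part of $p$ — this is where complete multipartiteness is genuinely used.

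For the implication (i)$\Rightarrow$(ii), I would argue by contraposition: if $G$ is nonempty, connected (forced, since a least element exists only if $\mathfrak M_w\neq\emptyset$ behaves well — actually one should first note $G$ must be connected by a variant of Proposition~\ref{p7}'s argument, or handle disconnected $G$ directly) and not complete multipartite, then the relation of nonadjacency on $V(G)$ is not transitive: there exist vertices $a,b,c$ with $\{a,b\}\notin E$, $\{b,c\}\notin E$ but $\{a,c\}\in E$. The strategy is to build two metrizable weights $w$ and two pseudometrics in $\mathfrak M_w$ whose pointwise minimum is not a pseudometric, or more simply to exhibit one metrizable $w$ for which no least element exists. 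I would try to construct, for suitable weight $w$, two members $\rho_1,\rho_2\in\mathfrak M_w$ such that any common lower bound fails the triangle inequality on the triple $a,b,c$: roughly, make $\rho_1(a,b)$ small and $\rho_1(b,c)$ large, and $\rho_2(a,b)$ large and $\rho_2(b,c)$ small, while $w(\{a,c\})$ is fixed; then a least element $\rho_0$ would need $\rho_0(a,b)$ and $\rho_0(b,c)$ both small, violating $\rho_0(a,b)+\rho_0(b,c)\geqslant\rho_0(a,c)=w(\{a,c\})$ if the latter is chosen large enough. Making the two pseudometrics genuinely lie in $\mathfrak M_w$ requires invoking Theorem~\ref{t1} (checking \eqref{e5} on all cycles) or, more cleanly, building them as shortest-path pseudometrics of the weighted graph with edge $\{a,c\}$ reweighted — this is the delicate part of the converse and the place I expect to spend the most effort.

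Finally, formula \eqref{e53} should follow from the shortest-path definition \eqref{e3} together with the structure of complete multipartite graphs: any path from $u$ to $v$ (both in $X_{\alpha_0}$) has length $\geqslant 2$, and the two-edge paths $u\!-\!p\!-\!v$ with $p\notin X_{\alpha_0}$ are exactly the shortest candidates, so one must argue that no longer path can beat the best two-edge path — this uses metrizability of $w$ (apply \eqref{e5} / the triangle-type inequality along the path to collapse it to a two-edge path through one of its interior vertices, which lies in some $X_\alpha\neq X_{\alpha_0}$ or, if it lies in $X_{\alpha_0}$, can be bypassed).

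\medskip
\noindent\emph{Main obstacle.} The hardest step is the converse direction (i)$\Rightarrow$(ii): producing, for a graph violating complete multipartiteness, an explicit metrizable weight $w$ together with a proof (via Theorem~\ref{t1}) that the associated poset $(\mathfrak M_w,\leqslant)$ has no least element. The triangle-inequality verification for $\rho_{0,w}$ in the forward direction is laborious but routine once the case analysis by parts is set up.
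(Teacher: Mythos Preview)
Your plan for (ii)$\Rightarrow$(i) takes a genuinely different route from the paper. The paper does \emph{not} define $\rho_{0,w}$ by formula~\eqref{e54}; it first sets, for nonadjacent $u,v$,
\[
\rho_0(u,v)=\sup_{P\in\mathcal P_{u,v}}\max_{e\in P}\bigl(2w(e)-w(P)\bigr)_+,
\]
proves this is a pseudometric via a separate lemma that collapses an arbitrary path to a two-edge path, and only afterward identifies $\rho_0$ with the right-hand side of \eqref{e54}. Your approach --- take \eqref{e54} as the definition and verify the triangle inequality directly --- is more economical and does work, but your case analysis is tangled. If $u,v,z$ all lie in $X_{\alpha_0}$ and $p\notin X_{\alpha_0}$ is a near-witness for $\rho_{0,w}(u,v)$, then $z$ \emph{is} adjacent to $p$, so the worry ``$z$ shares a part with $p$'' cannot arise there. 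That worry is genuine only when $z\notin X_{\alpha_0}$ (so $\rho_{0,w}(u,z)=w(\{u,z\})$ and $\rho_{0,w}(z,v)=w(\{z,v\})$), and then the remedy is \emph{not} a third-part vertex --- none need exist when $k=2$ --- but the cycle inequality \eqref{e5} applied to the quadrilateral $u,p,v,z$, which yields $|w(\{u,p\})-w(\{p,v\})|\leqslant w(\{u,z\})+w(\{z,v\})$ directly.

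For (i)$\Rightarrow$(ii) you have the right skeleton (nontransitivity of nonadjacency produces a bad triple $a,b,c$; build two pseudometrics whose common lower bound violates the triangle inequality at $a,c$), but you have not found the construction and are drifting toward a dead end. Realising $\rho_1,\rho_2$ as shortest-path pseudometrics after ``reweighting $\{a,c\}$'' takes you out of $\mathfrak M_w$, since $\rho(a,c)$ must equal $w(\{a,c\})$. The paper's device is much simpler: take $w\equiv 1$ and set $\rho_1(x,y)=0$ if $x=y$ or $\{x,y\}=\{a,b\}$, else $\rho_1(x,y)=1$; define $\rho_2$ analogously with $\{b,c\}$ in place of $\{a,b\}$. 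Each is a pseudoultrametric (every triangle is isosceles with base no longer than the legs), hence lies in $\mathfrak M_w$, and a least element $\rho_0$ would then give $1=\rho_0(a,c)\leqslant\rho_0(a,b)+\rho_0(b,c)\leqslant 0$. The paper also isolates as a separate lemma the fact that (i) forces $G$ to be connected (again via $w\equiv 1$, gluing two components at distance zero in two different ways), which you allude to but do not carry out.
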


\begin{remark}
Formulas \eqref{e53} and \eqref{e54} give the generalization of
double inequality \eqref{e2*} for an arbitrary complete $k$-partite
graph, with $k\geqslant 2$. The quadrilateral $Q$ depicted by
Figure~1 is evidently a complete bipartite graph.
\end{remark}

\begin{lemma}\label{l22}
Let $G$ be a connected nonempty graph. The following conditions are
equivalent
\begin{itemize}
\item[\rm(i)] For each metrizable $w:E(G)\to\mathbb{R}^+$ the poset $(\mathfrak M_w,\leqslant)$
contains the least pseudometric $\rho_{0,w}$.

\item[\rm(ii)] For every two distinct nonadjacent vertices $u$ and $v$ and each $p\in
V(G)$ with $u\ne p\ne v$ we have either $$ \{u,p\}\in E(G)\ \&\
\{v,p\}\in E(G) $$ or $$ \{u,p\}\not\in E(G)\ \&\ \{v,p\}\not\in
E(G) .$$
\end{itemize}
\end{lemma}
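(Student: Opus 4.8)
The plan is to prove the contrapositive of (i)$\Rightarrow$(ii) and the forward implication (ii)$\Rightarrow$(i). For (i)$\Rightarrow$(ii), suppose (ii) fails: there are distinct nonadjacent vertices $u,v$ and a vertex $p\neq u,v$ with, say, $\{u,p\}\in E(G)$ and $\{v,p\}\notin E(G)$ (the roles of $u,v$ being symmetric in (ii)). I claim the constant weight $w\equiv 1$ witnesses the failure of (i). It is metrizable by Theorem~\ref{t1}(iii), since every cycle has at least $3\geq 2$ edges. Moreover, for any distinct nonadjacent $a,b\in V(G)$, extending $w$ to the supergraph obtained by adding the edge $\{a,b\}$ with weight $0$ again satisfies Theorem~\ref{t1}(iii): a cycle through $\{a,b\}$ consists of that edge together with a path in $G$ joining $a$ and $b$, which has at least two edges of weight $1$. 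Hence there is $\rho\in\mathfrak M_w$ with $\rho(a,b)=0$. Applying this to $(u,v)$ and to $(v,p)$ shows that a least element $\rho_{\min}$ of $\mathfrak M_w$, if it existed, would satisfy $\rho_{\min}(u,v)=\rho_{\min}(v,p)=0$, whence $\rho_{\min}(u,p)\leq 0$, contradicting $\rho_{\min}(u,p)=w(\{u,p\})=1$. So $\mathfrak M_w$ has no least element and (i) fails.

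For (ii)$\Rightarrow$(i) I first note that (ii) forces $G$ to be complete multipartite: the relation $x\equiv y$ meaning ``$x=y$ or $\{x,y\}\notin E(G)$'' is reflexive and symmetric, and transitive precisely by an application of (ii), so its classes $\{X_\alpha\}_{\alpha\in\mathcal I}$ are the parts of $G$. Since $G$ is connected and nonempty, either $|V(G)|=1$, when (i) is trivial, or $k:=|\mathcal I|\geq 2$; assume the latter. Then any two distinct nonadjacent vertices lie in one common part and have a common neighbour in every other part. Fix a metrizable $w$ and define $\rho_{0,w}$ by $\rho_{0,w}(x,x)=0$; $\rho_{0,w}(x,y)=w(\{x,y\})$ if $\{x,y\}\in E(G)$; and $\rho_{0,w}(x,y)=\sup\{\,|w(\{x,p\})-w(\{p,y\})|: p\notin X_{\alpha}\,\}$ if $x\neq y$ lie in a common part $X_{\alpha}$. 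This is symmetric and vanishes on the diagonal, and it is finite, because for such $x,y,p$ one has $|w(\{x,p\})-w(\{p,y\})|=|d_w(x,p)-d_w(p,y)|\leq d_w(x,y)<\infty$ by Theorem~\ref{t1}(ii) and Proposition~\ref{p7} ($d_w$ being an honest pseudometric lying in $\mathfrak M_w$).

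The inequality $\rho_{0,w}\leqslant\rho$ for every $\rho\in\mathfrak M_w$ is immediate: there is nothing to check on the diagonal or on edges, while for $x\neq y$ in a common part $X_\alpha$ and $p\notin X_\alpha$ one has $\rho(x,y)\geq|\rho(x,p)-\rho(p,y)|=|w(\{x,p\})-w(\{p,y\})|$, so taking the supremum over $p$ gives $\rho(x,y)\geq\rho_{0,w}(x,y)$. Hence it remains only to show that $\rho_{0,w}\in\mathfrak M_w$, i.e.\ that $\rho_{0,w}$ is a pseudometric; it will then automatically be the least element of $(\mathfrak M_w,\leqslant)$, which is (i).

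The one nonformal point is the triangle inequality for $\rho_{0,w}$ on a triple of distinct vertices $x,y,z$, which I would check by cases on the pattern of the parts containing them: all three in one part; exactly two in one part (three sub-cases, according to which two); all three in distinct parts. When all three pairs are edges this is the triangle inequality of Remark~\ref{r4} for the $3$-cycle on $x,y,z$. Otherwise some $\rho_{0,w}$-value is a supremum, and two observations suffice: (a) a vertex $q$ lying in a part distinct from both endpoints of a given pair is an admissible index for that pair, giving a lower bound $\rho_{0,w}(\cdot,\cdot)\geq|w(\{\cdot,q\})-w(\{q,\cdot\})|$ --- used when a supremum-defined value appears on the larger side of the inequality; and (b) for distinct $a,b,c,d$ with $\{a,b\},\{b,c\},\{c,d\},\{a,d\}\in E(G)$, iterating the triangle inequality for $d_w\in\mathfrak M_w$ gives $w(\{a,d\})=d_w(a,d)\leq w(\{a,b\})+w(\{b,c\})+w(\{c,d\})$, so a difference $|w(\{x,p\})-w(\{p,y\})|$ occurring in a supremum is bounded above by a sum of two edge weights --- used when a supremum-defined value appears on the smaller side. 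Routing the common neighbour $p$ from the relevant supremum through (a) or (b) and then taking the supremum over $p$ settles every case, so $\rho_{0,w}$ is a pseudometric and hence the least element of $(\mathfrak M_w,\leqslant)$. I expect this case analysis, though entirely routine, to be the main obstacle.
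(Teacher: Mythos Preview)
Your proof is correct, and for (ii)$\Rightarrow$(i) it takes a genuinely different route from the paper's.

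For (i)$\Rightarrow$(ii) your argument is essentially the paper's: both use the constant weight $w\equiv 1$ and produce two pseudometrics in $\mathfrak M_w$ collapsing the pairs $\{u,v\}$ and $\{v,p\}$ respectively, then derive the contradiction $1=\rho_{0,w}(u,p)\le 0$. The paper writes down these pseudometrics explicitly; you obtain them more structurally by adding a weight-$0$ edge to a supergraph and invoking Theorem~\ref{t1}. Both are clean.

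For (ii)$\Rightarrow$(i) the paper proceeds in two stages. It first defines the candidate via the path formula
\[
\rho_0(u,v)=\sup_{P\in\mathcal P_{u,v}}\max_{e\in P}(2w(e)-w(P))_+
\]
and proves the triangle inequality for this $\rho_0$; the hardest case (all three pairs non-edges) requires an auxiliary result, Lemma~\ref{l18}, which shows that the supremum over all paths is already attained on $2$-paths. Only later (in Theorem~\ref{t21}, via Lemma~\ref{l24}) is the complete-multipartite structure extracted from (ii). You invert this order: you first read off the multipartite structure directly from (ii) (the non-adjacency relation is an equivalence), and then define $\rho_{0,w}$ straightaway by the $2$-path formula $\sup_p|w(\{x,p\})-w(\{p,y\})|$. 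This lets you bypass Lemma~\ref{l18} entirely: your triangle-inequality case analysis needs only the elementary observations (a) and (b), and I have checked that every case indeed goes through with them (in Case~3 one uses the same $p$ as witness in both suprema on the right; in Case~2 one uses either $q=z$ in (a) or the $4$-cycle $x,p,y,z$ in (b)). What the paper's approach buys is the general path formula for $\rho_{0,w}$, which has independent interest; what yours buys is a shorter, more self-contained proof of the lemma itself. One minor remark: since ``nonempty'' here means $E(G)\neq\emptyset$, the case $|V(G)|=1$ that you single out cannot occur.
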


\begin{proof}
{\bf(i)$\Rightarrow$(ii)} Suppose condition (ii) does not hold. Let
$v_1,v_2,v_3$ be distinct vertices of $G$ such that $v_1$ and $v_2$
are nonadjacent and $v_2$ and $v_3$ are also nonadjacent but
$\{v_3,v_1\}\in E(G)$. Define the weight $w$ as $w(e)=1$ for all
$e\in E(G)$. Let $\rho_1$ and $\rho_2$ be the distance functions on
$V(G)$ such that:
\begin{itemize}
\item[] $\rho_1(v_2,v_1)=\rho_1(v_1,v_2)=\rho_1(u,u)=0$ for all $u\in V(G)$ and
$\rho_1(u,v)=1$ otherwise;

\item[] $\rho_2(v_3,v_1)=\rho_2(v_2,v_3)=\rho_2(u,u)=0$ for all $u\in V(G)$ and
$\rho_2(u,v)=1$ otherwise.
\end{itemize}
It is easy to see that every triangle in $(V(G),\rho_1)$ or in
$(V(G),\rho_2)$ is an isosceles triangle having the third side
shorter or equal to the common length of the other two sides. Hence
$\rho_1$ and $\rho_2$ are pseudometrics and even pseudoultrametrics.
Furthermore $\rho_1$ and $\rho_2$ belong to $\mathfrak M_w$. Suppose
that there is the least pseudometric $\rho_{0,w}$ in $\mathfrak
M_w$. Then we obtain the contradiction
\begin{multline*}
1=\rho_{0,w}(v_2,v_3)\leq\rho_{0,w}(v_2,v_1)+\rho_{0,w}(v_1,v_3)\\\leq
(\rho_1\wedge\rho_2)(v_2,v_1)+(\rho_1\wedge\rho_2)(v_1,v_3)=0+0=0.
\end{multline*}

{\bf(ii)$\Rightarrow$(i)} Suppose condition (ii) holds. Let $w$ be a
metrizable weight. For each pair $u,v$ of vertices of $G$ write:
$$
\rho_0(u,v)=0\quad\text{if } u=v;\qquad \rho_0(u,v)=w\{u,v\}\quad
\text{if }\{u,v\}\in E(G);
$$
and
\begin{equation}\label{e34*}
\rho_0(u,v):=\sup_{P\in\mathcal P_{u,v}}\max_{e\in P}
(2w(e)-w(P))_+
\end{equation}
if  $\{u,v\}\not\in E(G)$ and $ u\ne v$. Here we use the notation
$$
t_+:=\begin{cases} t&\text{if }t\geq0\\ 0&\text{if }t<0.
\end{cases}
$$
We claim that $\rho_0$ is the least element of $(\mathfrak
M_w,\leqslant)$. To show this it suffices to prove the triangle
inequality for $\rho_0$. Indeed, in this case $\rho_0$ is a
pseudometric and $\rho_0\in\mathfrak M_w$ by the definition of
$\rho_0$. Moreover if $\rho$ is an arbitrary pseudometric from
$\mathfrak M_w$, then \eqref{e5} implies:
$$
2w(e)\leq w(P)+\rho(u,v)
$$
for all distinct $u,v\in V(G)$,  all $P\in\mathcal P_{u,v}$ and
all $e\in P$. Consequently we have
$$
2w(e)-w(P)\leq \rho(u,v),
$$
$$
(2w(e)-w(P))_+\leq (\rho(u,v))_+=\rho(u,v),
$$
\begin{equation}\label{e35}
\sup_{P\in\mathcal P_{u,v}}\max_{e\in
P}(2w(e)-w(P))_+\leq\rho(u,v).
\end{equation}
The last inequality and \eqref{e34*} imply \eqref{e33*} with
$\rho_{0,w}=\rho_0$. Thus $\rho_0$ is the least pseudometric in
$\mathfrak M_w$.

Let us turn to the triangle inequality for $\rho_0$. Let $x,y,z$ be
some distinct vertices of $G$. Since $w$ is metrizable, the
definition of $\rho_0$ implies this inequality if $\{x,y\},\{y,z\}$
and $\{z,x\}$ belong to $E(G)$. Let $\{x,y\}\not\in E(G)$. In
accordance with condition (ii), either both $\{y,z\}$ and $\{z,x\}$
are   edges of $G$ or both $\{y,z\}$ and $\{z,x\}$ are not  edges of
$G$.

Suppose $\{y,z\},\{z,x\}\in E(G)$. The three-point sequence
$P_1:=(x,z,y)$ is a path joining   $x$ and $y$. Consequently by
\eqref{e34*} we obtain
$$
\rho_0(x,y)\geq\max_{e\in
P_1}(2w(e)-w(P_1))_+=|w(\{x,z\})-w(\{z,y\})|.
$$
Thus
\begin{equation}\label{e36}
\rho_0(x,y)+\min(\rho_0(x,z),\rho_0(z,y))\geq
\max(\rho_0(x,z),\rho_0(z,y)).
\end{equation}
To prove the inequality
$$
\rho_0(x,y)\leq\rho_0(x,z)+\rho_0(z,y)
$$
it is sufficient to show
\begin{equation}\label{e37}
\max_{e\in P}(2w(e)-w(P))_+\leq\rho_0(x,z)+\rho_0(z,y)
\end{equation}
for each path $P$ joining  $x$ and $y$. This inequality is trivial
if its left part equals zero. In the opposite case, \eqref{e37}
can be rewritten in the form
$$
2\max_{e\in P}w(e)\leq w(P)+w(\{x,z\})+w(\{z,y\}).
$$
Applying \eqref{e5} we see that the last inequality holds, so
\eqref{e37} follows. (Note that inequality \eqref{e5} holds for
each closed walk in $G$ if it holds for each cycle in $G$.)

It is slightly more difficult to prove the triangle inequality for
$\rho_0$ when
\begin{equation}\label{e38}
\{y,z\}\not\in E(G),\quad \{z,x\}\not\in E(G)\quad \text{and}\quad
\{z,y\}\not\in E(G).
\end{equation}
To this end, we establish first the following lemma.

\begin{lemma}\label{l18}
Let $(G,w)$ be a connected, weighted graph with a metrizable $w$,
let condition {\rm (ii)} of Lemma \ref{l22} hold and let $x,y$ be
distinct nonadjacent vertices of $G$. Then, for every $P\in\mathcal
P_{x,y}$ there is $v\in V(G)$ with $\{v,x\},\{v,y\}\in E(G)$ and
such that
\begin{equation}\label{e39}
\max_{e\in P}(2w(e)-w(P))_+\leq |w(\{x,v\})-w(\{v,y\})|.
\end{equation}
\end{lemma}
\begin{proof}
Let $P=(x,v_1,\dots,v_n,y)$ be a path joining  $x$ and $y$ in $G$.
We claim that there is a path $(x,v,y)$ in $G$ such that
\eqref{e39} holds. It is trivial  if $n=1$ or if the left part in
\eqref{e39} is zero. So we may suppose that $n\geq2\ (v_1\ne v_2)$
and
\begin{equation}\label{e40}
2\max_{e\in P}w(e)>w(P).
\end{equation}
Condition (ii) of Lemma~\ref{l22} implies
\begin{equation}\label{e41}
\{v_1,y\}\in E(G)\quad\text{and}\quad \{x,v_n\}\in E(G),
\end{equation}
see Figure 4. For convenience we write
$$
M:=\max_{e\in P}w(e).
$$\begin{figure}[h]
\centering
\includegraphics[keepaspectratio,width=10cm]{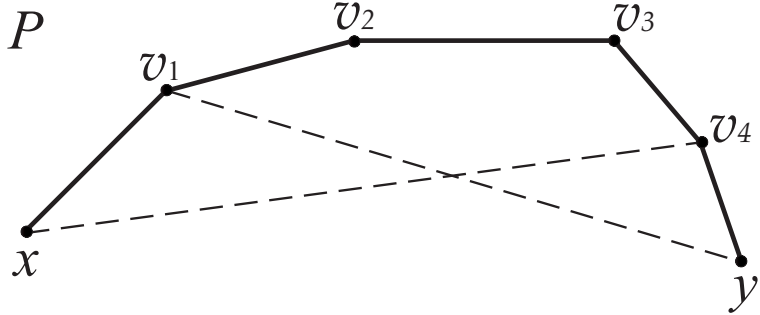}
\caption{The path $P$ joining   $x$ and $y$ with $n=4$ (heavily
drawn lines), and two additional edges $\{x,v_4\},\{y,v_1\}$
(dotted lines).}
\end{figure}

\noindent Let us prove \eqref{e39}. If $M=w(\{x,v_1\})$, then
\begin{multline*}
2M-w(P)=w(\{x,v_1\})-\bigg(\sum_{i=1}^{n-1}
w(\{v_i,v_{i+1}\})+w(\{v_n,y\})\bigg)\\\leq
w(\{x,v_1\})-w(\{v_1,y\})
\end{multline*}
because $w$ is metrizable and so  we have the ``triangle
inequality''
$$
w(\{v_1,y\})\leq \sum_{i=1}^{n-1} w(\{v_i,v_{i+1}\})+w(\{v_n,y\}).
$$
Hence the path $(x,v_1,y)$ satisfies \eqref{e39} with $v=v_1$.
Similarly if $M=(\{v_n,y\})$, then the desired path is
$(x,v_n,y)$.

Suppose now that
\begin{equation}\label{e41}
w(\{v_1,y\})\geq w(\{x,v_1\})\quad\text{and}\quad M=\max_{1\leq
i\leq n-1}w(\{v_i,v_{i+1}\}).
\end{equation}
Since $w$ is metrizable, applying \eqref{e5} to the cycle
$(v_1,v_2,\dots,y,v_1)$ we obtain
$$
w(\{v_1,y\})+w(\{v_n,y\})+\sum_{i=1}^{n-1}w(\{v_i,v_{i+1}\})\geq
2M.
$$
Consequently
$$
w(\{v_1,y\})-w(\{x,v_1\})\geq 2M-w(P).
$$
Thus $(x,v_1,y)$ satisfies \eqref{e39} with $v=v_1$ if \eqref{e41}
holds. Similarly \eqref{e39} holds with $v=v_n$ if
\begin{equation}\label{e42}
w(\{v_n,x\})\geq w(\{y,v_n\})\quad\text{and}\quad M=\max_{1\leq
i\leq n-1}w(\{v_i,v_{i+1}\}).
\end{equation}

It still remains to find $(x,v,y)$ satisfying \eqref{e39} if
\begin{equation}\label{e43}
\begin{gathered}
w(\{v_1,y\})\leq w(\{x,v_1\}),\quad w(\{v_n,x\})\leq
w(\{y,v_n\})\\
\text{and } M=\max_{1\leq i\leq n-1}w(\{v_i,v_{i+1}\}).
\end{gathered}
\end{equation}

Let us consider the new path $F=(x,u_1,\dots,u_n,y)$ such that
$u_1=v_n,\ u_2=v_{n-1},\dots, u_n=v_1$, see Fig.~5.
\begin{figure}[h]
\centering
\includegraphics[keepaspectratio,width=10cm]{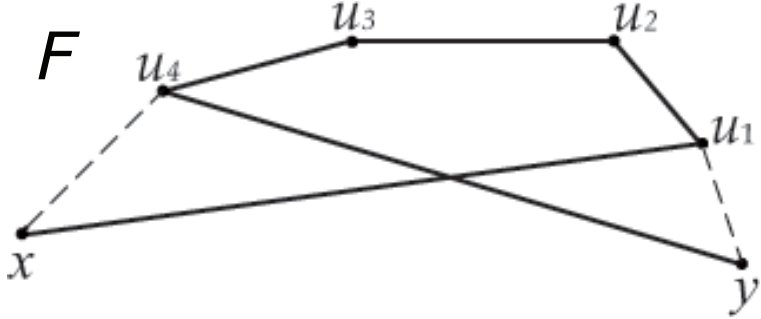}
\caption{The new path $F$ is a modification of the old path $P$.}
\end{figure}
Condition \eqref{e43} implies that
$$
M=\max_{1\leq i\leq n-1} w(\{u_i,u_{i+1}\})=\max_{e\in F}w(e)
$$
and, moreover, that $w(F)\leq w(P)$. Hence it suffices to prove
the inequality
$$
\max_{e\in F}(2w(e)-w(F))_+\leq|w(\{x,v\})-w(\{v,y\})|
$$
for a 2-path $(x,v,y)$ in $G$. We can make it in a way analogous to
that was used under consideration of restriction \eqref{e41} if
$$
w(\{u_1,y\})\geq w(\{x,u_1\}).
$$
To complete the proof, it suffices to observe that the last
inequality can be rewritten as
$$
w(\{v_n,y\})\geq w(\{x,v_n\})
$$
which follows from \eqref{e43}.
\end{proof}

{\it Continuation of the proof of Lemma \ref{l22}.} It still remains
to prove the inequality
\begin{equation}\label{e44}
\rho_0(x,y)\leq \rho_0(x,z)+\rho_0(z,y)
\end{equation}
if $x,y,z$ are distinct vertices such that
\begin{equation}\label{e45}
\{x,y\}\not\in E(G),\quad \{x,z\}\not\in
E(G)\quad\text{and}\quad\{z,y\}\not\in E(G).
\end{equation}
It follows from Lemma \ref{l18} that
\begin{equation}\label{e46}
\rho_0(x,y)=\sup_v|w(\{x,v\})-w(\{v,y\})|
\end{equation}
where the supremum is taken over the set of all vertices $v$ such
that
\begin{equation}\label{e47}
\{x,v\},\{v,y\}\in E(G).
\end{equation}
Condition (ii), relations \eqref{e45} and relations \eqref{e47}
give the membership relation
$$
\{z,v\}\in E(G).
$$
Thus the weight function $w$ is defined at the ``point''
$\{z,v\}$. Hence
\begin{multline*}
|w(\{x,v\})-w(\{v,y\})|\leq
|w(\{x,v\})-w(\{v,z\})|+|w(\{v,z\})-w(\{v,y\})|\\\leq\rho_0(x,z)+\rho_0(z,y).
\end{multline*}
These inequalities and \eqref{e46} imply \eqref{e44}.
\end{proof}

Recall that a subgraph $H$ of a graph $G$ is {\it induced} if $E(H)$
consists of all edges of $G$ which have both ends in $V(H)$.

\begin{figure}[h]
\includegraphics[width=\textwidth,keepaspectratio]{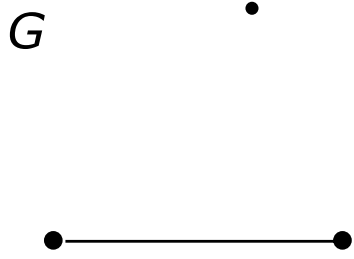}
\caption{ The graph $G$ is not complete $k$-partite for any
$k\geqslant 1$. }
\end{figure}

\begin{proposition}\label{p26*}
A nonnull graph is complete multipartite if and only if it has no
induced subgraphs depicted by Figure~6.
\end{proposition}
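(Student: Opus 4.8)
The plan is to reduce the statement to a triviality about a single binary relation. For a nonnull graph $G$ introduce on $V(G)$ the relation $\approx$ defined by $u\approx v$ if and only if $u=v$ or $\{u,v\}\notin E(G)$. Since $G$ is simple, $\approx$ is automatically reflexive and symmetric, so the only issue is transitivity. The key observation is the translation: \emph{$G$ is complete multipartite if and only if $\approx$ is an equivalence relation.} Indeed, if $\approx$ is transitive, let $\{X_\alpha:\alpha\in\mathcal I\}$ be its equivalence classes; these are nonvoid, pairwise disjoint, and cover $V(G)$; no edge has both ends in one class (such ends are $\approx$-related, hence non-adjacent), and two vertices in distinct classes are distinct and not $\approx$-related, hence adjacent; thus $G$ is complete $k$-partite with $k=\card(\mathcal I)\geqslant1$. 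Conversely, if $G$ is complete $k$-partite with parts $\{X_\alpha\}$, then two distinct vertices fail to be adjacent precisely when they lie in a common part, so $\approx$ is exactly the equivalence relation whose classes are the parts, and in particular it is transitive.

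The combinatorial heart is then to identify the obstruction to transitivity of $\approx$ with the forbidden induced subgraph of Figure~6, namely three distinct vertices bearing exactly one edge. Suppose transitivity fails: choose $u\approx v$ and $v\approx w$ with $u\not\approx w$. A short case check (using reflexivity) shows $u,v,w$ must be pairwise distinct; then $u\not\approx w$ together with $u\neq w$ gives $\{u,w\}\in E(G)$, while $u\approx v$, $v\approx w$ with $u\neq v\neq w$ give $\{u,v\}\notin E(G)$ and $\{v,w\}\notin E(G)$. Hence the subgraph induced on $\{u,v,w\}$ has exactly one edge, i.e.\ it is the graph of Figure~6. Conversely, if $\{v_1,v_2,v_3\}$ induces that graph with $\{v_1,v_3\}$ its unique edge, then $v_1\approx v_2$ and $v_2\approx v_3$ but $v_1\not\approx v_3$, so $\approx$ is not transitive.

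Combining the two equivalences yields the proposition: $G$ is complete multipartite iff $\approx$ is an equivalence relation iff $G$ has no induced copy of the Figure~6 graph. I would finish by noting that the degenerate cases are covered automatically --- if $G$ has a single vertex or no edges at all, then $\approx$ is trivially an equivalence relation and $G$ is complete $1$-partite, consistent with the absence of the forbidden subgraph. I do not expect a genuine obstacle here; the only points requiring care are the bookkeeping that forces $u,v,w$ to be pairwise distinct, and the bijective correspondence between the parts of a complete multipartite graph and the $\approx$-classes, both of which are immediate from the definitions.
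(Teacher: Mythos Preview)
Your proposal is correct and follows essentially the same approach as the paper: both introduce the non-adjacency relation on $V(G)$, observe that it is automatically reflexive and symmetric, and identify the Figure~6 subgraph (three vertices with exactly one edge) as precisely the obstruction to transitivity, so that the equivalence classes give the parts of the complete multipartite structure. The only cosmetic difference is that the paper handles the forward direction (complete multipartite $\Rightarrow$ no forbidden subgraph) by a direct part-membership argument rather than routing it through the equivalence-relation characterization, but the content is the same.
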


\begin{proof}
Suppose $G$ is a complete $k$-partite graph. If $k=1$, then all
subgraphs of $G$ are empty. Let $k\geqslant 2$. If $u$ and $v$ are
vertices of $G$ such that $\{u,v\}\notin E(G)$, then there exists a
part $V_1$ in the partition of $V(G)$ such that
$$
u\in V_1\quad\text{ and }\quad v\in V_1.
$$
If $p$ is a vertex of $G$ and $u\neq p\neq v$, then either $p\in
V_1$ or there is a part $V_2\neq V_1$ such that $p\in V_2$. Using
Definition \ref{d19} we obtain that $\{p,v\}\notin E(G)$ and
$\{p,u\}\notin E(G)$ if $p\in V_1$ or, in the opposite case $p\in
V_2$, that $\{p,v\}\in E(G)$ and $\{p,u\}\in E(G)$.

Assume now that $G$ has no induced subgraphs depicted by Figure~6.
Let us define a relation $\asymp$ on the set $V(G)$ as
\begin{equation}\label{e49}
(u\asymp v) \Leftrightarrow (\{u,v\}\notin E(G)).
\end{equation}
Relation $\asymp$ is evidently symmetric. Since simple graphs
contain no loops, we have $\{u,u\}\notin E(G)$ for each $u\in V(G)$.
Consequently $\asymp$ is reflexive. Moreover if $\{u,v\}\notin E(G)$
and $\{v,p\}\notin E(G)$, then we obtain $\{u,p\}\notin E(G)$. Thus
$\asymp$ is transitive, so this is an equivalence relation. The set
$V(G)$ is partitioned by the relation $\asymp$ on the disjoint parts
$V_i$, $i\in \mathcal{I}$, where $\mathcal{I}$ is an index set. It
follows directly from \eqref{e49} that no edge of $G$ has both ends
in the same part. Hence $G$ is a $k$-partite graph with
$k=\card\mathcal{I}$. Finally note that $\{u,v\}\in E(G)$ if and
only if the relation $u \asymp v$ does not hold. Consequently $G$ is
a complete $k$-partite graph.
\end{proof}

This proposition implies

\begin{lemma}\label{l24}
Let $G$ be a nonempty graph. Condition (ii) of Lemma \ref{l22} holds
if and only if $G$ is a complete $k$-partite graph with $k\geqslant
2$.
\end{lemma}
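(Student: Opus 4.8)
The plan is to deduce Lemma \ref{l24} directly from Proposition \ref{p26*}, once one observes that condition (ii) of Lemma \ref{l22} is nothing but the ``no induced $P_3$-with-a-chord'' configuration of Figure~6 rephrased. First I would unwind condition (ii) of Lemma \ref{l22}: it fails precisely when there exist three distinct vertices among which one nonadjacent pair shares a neighbour that is adjacent to one of them but not the other. Comparing this with the forbidden induced subgraph of Figure~6 (a path on three vertices plus exactly one extra edge, i.e. vertices $a,b,c$ with, say, $\{a,b\},\{b,c\}\notin E(G)$ but $\{a,c\}\in E(G)$, viewed on the vertex set $\{a,b,c\}$), I would check that these two descriptions coincide: the ``bad triple'' $v_1,v_2,v_3$ produced in the proof of (i)$\Rightarrow$(ii) of Lemma \ref{l22} is exactly an induced copy of the Figure~6 graph, and conversely any such induced subgraph violates condition (ii).

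Having made that identification, the argument is short. If $G$ is a complete $k$-partite graph with $k\geqslant 2$, then by Proposition \ref{p26*} (or directly from its proof, the paragraph showing complete multipartite graphs have no induced subgraph of Figure~6) $G$ has no induced subgraph of Figure~6, hence condition (ii) of Lemma \ref{l22} holds. Conversely, suppose condition (ii) of Lemma \ref{l22} holds. Then $G$ has no induced subgraph of Figure~6, so by Proposition \ref{p26*} $G$ is complete $k$-partite for some cardinal $k\geqslant 1$. Since $G$ is nonempty, $G$ is not $1$-partite (a $1$-partite graph is empty, by Remark \ref{r20}), hence $k\geqslant 2$, as required.

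The only genuine point requiring care — and hence the ``main obstacle'', such as it is — is the bookkeeping in matching up the three cases in condition (ii) of Lemma \ref{l22} with the single forbidden configuration of Figure~6: one must be careful that condition (ii) quantifies over a distinguished nonadjacent pair $u,v$ and a third vertex $p$, whereas Figure~6 is stated as an unlabelled induced subgraph, so one should verify that every relabelling of the Figure~6 graph indeed arises as a failure of condition (ii) and vice versa. Once this symmetry check is done the rest is immediate, and in particular there is no new combinatorics beyond what is already contained in Proposition \ref{p26*}.
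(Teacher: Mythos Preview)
Your approach is exactly the paper's: Lemma \ref{l24} is stated immediately after Proposition \ref{p26*} with the words ``This proposition implies'', and your plan---identify the failure of condition (ii) of Lemma \ref{l22} with the presence of an induced copy of the Figure~6 graph, then invoke Proposition \ref{p26*} and use nonemptiness to rule out $k=1$---is precisely that implication spelled out. One small correction: your verbal description of Figure~6 as ``a path on three vertices plus exactly one extra edge'' (or ``$P_3$-with-a-chord'') is wrong---that would be a triangle---but your explicit description (three vertices with exactly one edge, i.e.\ $\{a,c\}\in E(G)$ and $\{a,b\},\{b,c\}\notin E(G)$) is the correct one and is what Proposition \ref{p26*} actually forbids.
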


It still remains to prove the next lemma.

\begin{lemma}\label{l25}
Let $G$ be a nonempty graph. If condition (i) of Theorem \ref{t21}
holds, then $G$ is connected.
\end{lemma}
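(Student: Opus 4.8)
The plan is to prove the contrapositive: if $G$ is a nonempty \emph{disconnected} graph, then condition (i) of Theorem~\ref{t21} fails, i.e.\ there is a metrizable weight $w$ such that $(\mathfrak M_w,\leqslant)$ has no least element. The natural candidate weight is the one appearing repeatedly in the earlier arguments: take $w\equiv 1$ on $E(G)$, which is metrizable by condition~(iii) of Theorem~\ref{t1} (every cycle of length $n\geqslant 3$ satisfies $2\cdot 1\leqslant n$; and by Remark~\ref{r5} the null case is vacuous anyway, though here $G$ is nonempty).

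First I would fix a vertex $v$ lying in some component $G_i$ and a vertex $u$ lying in a \emph{different} component $G_j$ (possible since $G$ is disconnected and nonempty). Following the device of formulas \eqref{e11}--\eqref{e12}, for each real $a>0$ let $\rho_a\in\mathfrak M_w$ be the pseudometric obtained by setting $a_i=a$, $a_{i_0}=0$ for $i_0$ the component of $v$, and $a_\ell=0$ otherwise; concretely $\rho_a(u,v)=a+d_{w_i}(u,v_i^*)+d_{w_j}(v,v_j^*)$ in the notation there, while $\rho_a$ restricted to each single component is just the path metric. The key point is that $\{u,v\}$ is a two-element set whose $\rho_a$-distance tends to $+\infty$ as $a\to\infty$, exactly as in the proof of Proposition~\ref{p7}. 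So if a least pseudometric $\rho_{0,w}$ existed, we would have $\rho_{0,w}(u,v)\leqslant\rho_a(u,v)$ for every $a>0$, which forces $\rho_{0,w}(u,v)$ to be a finite real number $\leqslant\inf_a\rho_a(u,v)$, an acceptable constraint by itself — so I need a \emph{lower} bound instead.

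To get a contradiction I would instead exhibit \emph{two} pseudometrics in $\mathfrak M_w$ whose pointwise minimum violates the triangle inequality, mirroring the (i)$\Rightarrow$(ii) step of Lemma~\ref{l22}. Since $G$ is disconnected and nonempty, pick $u\in G_j$, and pick $v_1\ne v_2$ adjacent in the component $G_i\ne G_j$ (if some component has an edge) — or, if every nontrivial component needs care, simply pick $v$ in a component different from $u$'s and a third vertex; the point is that $u$ is nonadjacent to both $v_1$ and $v_2$, while $\{v_1,v_2\}\in E(G)$. Now define, in the spirit of the $\rho_1,\rho_2$ in Lemma~\ref{l22}, a pseudometric $\rho^{(1)}$ that ``glues'' $u$ to $v_1$ (i.e.\ $\rho^{(1)}(u,v_1)=0$, and distances otherwise computed from the path metric on each component with $u$'s component identified along $v_1$), and $\rho^{(2)}$ that glues $u$ to $v_2$; both lie in $\mathfrak M_w$ because they respect $w\equiv1$ on all existing edges and satisfy the triangle inequality (each is a genuine shortest-path pseudometric on a supergraph as in Remark~\ref{r6}). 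If $\rho_{0,w}$ were the least element then $\rho_{0,w}\leqslant\rho^{(1)}\wedge\rho^{(2)}$, giving $\rho_{0,w}(u,v_1)=0=\rho_{0,w}(u,v_2)$, hence by the triangle inequality $\rho_{0,w}(v_1,v_2)=0$, contradicting $\rho_{0,w}(v_1,v_2)=w(\{v_1,v_2\})=1$.

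The main obstacle is handling the degenerate case where \emph{no} component other than $u$'s contains an edge, i.e.\ every other component is a single isolated vertex: then there is no edge $\{v_1,v_2\}$ to anchor the contradiction to. In that situation I would take two distinct isolated-vertex components $\{v_1\}$ and $\{v_2\}$ (both different from $u$'s component, or one of them equal to $u$'s component if it is itself a single vertex — disconnected forces at least two components), glue $u$ to $v_1$ in $\rho^{(1)}$ and $u$ to $v_2$ in $\rho^{(2)}$, and also build a third pseudometric $\rho^{(3)}\in\mathfrak M_w$ with $\rho^{(3)}(v_1,v_2)=1$ (always possible, e.g.\ the shortest-path pseudometric on the supergraph joining all component-representatives to $u$ with weight-$1$ bridges). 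Then $\rho^{(1)}\wedge\rho^{(2)}$ has $(u,v_1)$- and $(u,v_2)$-distance $0$, so any lower bound $\rho_{0,w}$ satisfies $\rho_{0,w}(v_1,v_2)=0$, while $\rho^{(3)}$ shows this is not forced to be $0$ — but what we actually need is that \emph{some} element of $\mathfrak M_w$ has positive $(v_1,v_2)$-distance, so that $\rho_{0,w}$ (being $\leqslant$ all of them) cannot be a genuine least element unless it is $0$ there, yet it must also be a pseudometric with $\rho_{0,w}(u,v_1)=\rho_{0,w}(u,v_2)=0$; this is consistent, so instead I reduce to the edge case by noting that if $G$ is nonempty it has at least one edge $\{v_1,v_2\}$ lying in one component, and I choose $u$ in a \emph{different} component (which exists since $G$ is disconnected) — then $u$ is nonadjacent to both $v_1,v_2$, $\{v_1,v_2\}\in E(G)$, and the clean three-pseudometric argument of the previous paragraph applies verbatim. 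So the only real care needed is the bookkeeping that $\rho^{(1)},\rho^{(2)}$ are actually pseudometrics in $\mathfrak M_w$, which follows from Remark~\ref{r6} applied to suitable supergraphs with zero-weight identifications, exactly as $\rho_1,\rho_2$ were handled in Lemma~\ref{l22}.
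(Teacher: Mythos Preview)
Your argument is correct and, once you discard the false start with $\rho_a\to\infty$ and the unnecessary digression about components without edges, it is essentially identical to the paper's own proof: take $w\equiv 1$, pick an edge $\{v_1,v_2\}$ in one component and a vertex $u$ in another, use the construction of \eqref{e11}--\eqref{e12} (equivalently Remark~\ref{r6}) with zero constants to produce $\rho^{(1)},\rho^{(2)}\in\mathfrak M_w$ with $\rho^{(1)}(u,v_1)=\rho^{(2)}(u,v_2)=0$, and derive $\rho_{0,w}(v_1,v_2)=0\neq 1$ from the triangle inequality. The paper's writeup is simply the streamlined version of your final paragraph, with $(u_1,v_1,p)$ in place of your $(v_1,v_2,u)$.
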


\begin{proof}
Let $w:E(G)\to\mathbb{R}^+$ be a weight such that the equality
\begin{equation}\label{e50}
w(e)=1
\end{equation}
holds for all $e\in E(G)$. It is clear that $w$ is metrizable. Let
$\{u_1,v_1\}$ be an edge of $G$. If $G$ is disconnected, then there
are two connected components $G_1$ and $G_2$ of $G$ such that
$$
u_1\in V(G_1), \quad v_1\in V(G_1)\quad\text{ and }\quad V(G_1)\cap
V(G_2)=\emptyset.
$$
Let $p$ be a vertex of $G_2$. Using formulas \eqref{e11},
\eqref{e12} with zero constants $a_i$ we can find some pseudometrics
$\rho_1,\rho_2\in\mathfrak M_w$ for which
\begin{equation}\label{e51}
\rho_1(u_1,p)=0\qquad\text{ and }\qquad\rho_2(v_1,p)=0.
\end{equation}
If condition (i) of Theorem \ref{t21} holds, then for the least
pseudometric $\rho_{0,w}$ in $\mathfrak M_w$ we have the
inequalities
$$
\rho_{0,w}(u_1,p)\leqslant\rho_1(u_1,p)\quad\text{ and
}\quad\rho_{0,w}(v_1,p)\leqslant\rho_2(v_1,p).
$$
These inequalities, the triangle inequality and \eqref{e51} imply
$$
\rho_{0,w}(u_1,v_1)\leqslant\rho_1(u_1,p)+\rho_2(v_1,p)=0.
$$
Since $\rho_{0,w}\in\mathfrak M_w$, it implies $w(e)=0$ for
$e=\{u_1,v_1\}$, contrary \eqref{e50}.
\end{proof}

\begin{proof}[Proof of Theorem \ref{t21}.]
Suppose that condition (i) of the theorem holds. By Lemma \ref{l25},
$G$ is a connected graph and so we can use Lemma \ref{l22}. Applying
this lemma we obtain the equivalence of its condition (ii) with
condition (i) of Theorem \ref{t21}. By Lemma \ref{l24} condition
(ii) of Lemma \ref{l22} implies condition (ii) of the Theorem
\ref{t21}.

Conversely, suppose that condition (ii) of Theorem \ref{t21} holds.
Using Lemma \ref{l24} we see that condition (ii) of Lemma \ref{l22}
holds. Moreover condition (ii) of Theorem \ref{t21} implies that $G$
is connected, see Remark \ref{r20}. Hence by Lemma \ref{l22} we
obtain condition (i) of the theorem. Thus we have the implication
(ii)$\Rightarrow$(i) in Theorem \ref{t21}.

Assume now that $G$ is complete $k$-partite graph with $k\geqslant
2$ and $w$ is metrizable. Let $u$ and $v$ be some distinct
nonadjacent vertices of $G$. Then we have $u,v\in X_{\alpha_0}$ for
some $\alpha_0\in\mathcal{I}$. We must to prove equalities
\eqref{e53} and \eqref{e54}. For every vertex $p\notin X_{\alpha_0}$
the sequence $(u,p,v)$ is a path joining $u$ and $v$. Consequently
the inequality
\begin{equation}\label{e58}
d_w(u,v)\leqslant\underset{
{\substack{\alpha\neq\alpha_0,\\\alpha\in\mathcal{I}}}}{\inf}\,
\underset{p\in X_{\alpha}}{\inf}|w(\{u,p\})+w(\{p,v\})|
\end{equation}
follows from \eqref{e3}. To prove the converse inequality it is
sufficient to show that for every $F\in\mathcal{P}_{u,v}$ there is
$p\in X_{\alpha}$, $\alpha\neq\alpha_0$, such that
\begin{equation}\label{e58*}
w(F)\geqslant w(\{u,v\})+w(\{p,v\}).
\end{equation}
Since $u$ and $v$ are nonadjacent, the length (the number of edges)
of $F$ is more or equal 2 for every $F\in\mathcal{P}_{u,v}$.  If the
length of $F$ is 2, then the "inner" vertex of $F$ does not belong
to $X_{\alpha_0}$ so we have \eqref{e58}. Let
$(u=v_0,v_1,\ldots,v_n=v)$ belong to $\mathcal{P}_{u,v}$ and
$n\geqslant 3$. If $v_1\in X_{\alpha}$, then $\alpha\neq\alpha_0$
and $\{u,v\}\in E(G)$ because $G$ is a complete $k$-partite graph.
Since $w$ is metrizable, statement (ii) of Theorem \ref{t1} implies
\begin{equation}\label{e59}
w(\{v_1,v\})\leqslant w(F')
\end{equation}
where $F'$ is the part $(v_1,\ldots,v_n)$. It is clear that
$$
w(F)=w(\{u,v_1\})+w(F').
$$
Consequently \eqref{e59} implies \eqref{e58*} with $p=v_2$. Equality
\eqref{e53} follows.

To prove \eqref{e54} we now return to lemmas \ref{l22}, \ref{l18},
\ref{l24}. By the assumption $G$ is a complete $k$-partite graph.
Hence, by Lemma \ref{l24}, condition (ii) of Lemma \ref{l22} holds.
This condition implies that
\begin{equation}\label{e60}
\rho_{0,w}(u,v)=\underset{F\in\mathcal{P}_{u,v}}{\sup}
\underset{e\in F}{\max}(2w(e)-w(F))_+
\end{equation}
see \eqref{e34*} and \eqref{e35}. Using Lemma \ref{l18} we obtain
that for every $F\in\mathcal{P}_{u,v}$ there is $p\in V(G)$ with
$\{u,p\},\{p,v\}\in E(G)$ and such that
$$
\underset{e\in F}{\max}(2w(e)-w(F))_+\leqslant
|w(\{u,p\})-w(\{p,v\})|.
$$
Consequently we have
$$
\rho_{0,w}(u,v)\leqslant\underset{
{\substack{\alpha\neq\alpha_0,\\\alpha\in\mathcal{I}}}}{\sup}\,
\underset{p\in X_{\alpha}}{\sup}|w(\{u,p\})-w(\{p,v\})|
$$
for every two distinct nonadjacent vertices $u,v$. The converse
inequality follows from \eqref{e60}. Indeed, for every path $F$ of
the form $(u,p,v)$ we have
$$
|w(\{u,p\})-w(\{p,v\})|=\underset{e\in F}{\max}(2w(e)-w(F))_+.
$$
\end{proof}

Recall that the star is a complete bipartite graph $G$ with a
bipartition $(X,Y)$,
$$
V(G)=X\cup Y,\qquad X\cap Y=\emptyset
$$
such that $\card X=1$ or $\card Y=1$.

\begin{corollary}
The following conditions are equivalent for each nonempty graph $G$.
\begin{itemize}
\item[\rm(i)] Every weight $w:E(G)\to\mathbb{R}^+$ is
metrizable and the poset $(\mathfrak{M}_w,\leqslant)$ contains the
least pseudometric $\rho_{0,w}$.

\item[\rm(ii)] $G$ is a star.
\end{itemize}
\end{corollary}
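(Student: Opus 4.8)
The plan is to derive this corollary by intersecting the two structural characterizations already established. First I would note that condition~(i) is the conjunction of two separate requirements on $G$: that every weight $w\colon E(G)\to\mathbb R^+$ be metrizable, and that $(\mathfrak M_w,\leqslant)$ possess a least element for every metrizable $w$. By Corollary~\ref{c6} the first requirement is equivalent to $G$ being a forest, and, since $G$ is nonempty, Theorem~\ref{t21} says the second is equivalent to $G$ being a complete $k$-partite graph with $k\geqslant 2$. By Remark~\ref{r20} such a graph is connected, and a connected forest is a tree; hence condition~(i) holds if and only if $G$ is a tree that is complete $k$-partite with $k\geqslant 2$.

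It then remains to check that a tree of this form is exactly a star. If the number of parts is $\geqslant 3$, choosing one vertex in each of three distinct parts yields three pairwise adjacent vertices, i.e.\ a $3$-cycle, which is impossible in a tree; so $k=2$ and $G$ is complete bipartite with some bipartition $(X,Y)$. If $\card X\geqslant 2$ and $\card Y\geqslant 2$, then picking distinct $x_1,x_2\in X$ and distinct $y_1,y_2\in Y$ produces the $4$-cycle $(x_1,y_1,x_2,y_2,x_1)$, again impossible in a tree. Therefore $\card X=1$ or $\card Y=1$, which is precisely the definition of a star; this proves (i)$\Rightarrow$(ii). Conversely, a star is a connected bipartite graph one of whose parts is a singleton, hence acyclic (every cycle of a bipartite graph meets each part in at least two vertices), so it is a forest and Corollary~\ref{c6} makes every weight metrizable; and it is complete bipartite, i.e.\ complete $k$-partite with $k=2\geqslant 2$, so Theorem~\ref{t21} supplies the least pseudometric $\rho_{0,w}$ for every metrizable $w$. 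Hence (ii)$\Rightarrow$(i).

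I do not anticipate a genuine obstacle: all the analytic content has been absorbed into Corollary~\ref{c6} and Theorem~\ref{t21}, and what is left is the elementary observation that forbidding $3$-cycles and $4$-cycles in a complete multipartite graph forces it to be a $K_{1,n}$. The only places demanding a little care are matching the paper's conventions — that ``nonempty'' means $E(G)\neq\emptyset$, and that the definition of a star admits the single edge $K_{1,1}$ — and confirming that Theorem~\ref{t21} is being applied to a graph that is indeed nonempty, which it is by the hypothesis of the corollary.
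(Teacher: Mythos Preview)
Your proposal is correct and follows essentially the same approach as the paper: invoke Corollary~\ref{c6} to reduce ``every weight is metrizable'' to acyclicity, invoke Theorem~\ref{t21} to reduce ``least element always exists'' to complete $k$-partiteness with $k\geqslant 2$, and then observe that excluding $3$-cycles forces $k=2$ while excluding $4$-cycles forces one part to be a singleton. The only cosmetic difference is that you route the argument through ``connected forest $=$ tree'' via Remark~\ref{r20}, whereas the paper simply says ``acyclic'' throughout; the substance is identical.
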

\begin{proof}
Let condition (i) hold. Then, by Theorem \ref{t21},  $G$ is complete
$k$-partite  with $k\geqslant 2$ and by Corollary \ref{c6} $G$ is
acyclic. Each $k$-partite graph with $k\geqslant 3$ contains a
3-cycle (triangle). Hence we have $k=2$, i.e. $G$ is bipartite. If
$(X,Y)$ is a bipartion of $G$ with
$$
\card X\geqslant 2\qquad\text{and}\qquad\card Y\geqslant 2,
$$
then we can find some vertices
$$
x_1,x_3\in X\qquad\text{and}\qquad x_2,x_4\in Y.
$$
Since $G$ is a complete bipartite graph, $G$ contains the
quadrilateral $Q$, see Fig.~1. Consequently we have $\card X=1$ or
$\card Y=1$. Thus $G$ is a star.

Conversely suppose $G$ is a star. Then $G$ is acyclic, so using
Corollary \ref{c6} we obtain that every weight $w$ is metrizable.
Since stars are complete bipartite graphs, Theorem \ref{t21} implies
the existence of the least pseudometric $\rho_{0,w}\in\mathfrak
M_w$.
\end{proof}

\begin{theorem}\label{t40}
The following conditions (i) and (ii) are equivalent for each
nonempty graph $G$.
\begin{itemize}
\item[\rm(i)] For each metrizable weight $w:E(G)\to\mathbb{R}^+$ the
set $\mathfrak{M}_w$ contains the least pseudometric $\rho_{0,w}$
and this set contains also all symmetric functions $f:V(G)\times
V(G)\to \mathbb{R}^+$ which lie between $\rho_{0,w}$ and the
shortest-path pseudometric $d_w$, i.e., which satisfy the double
inequality
\begin{equation}\label{e52}
\rho_{0,w}(u,v)\leqslant f(u,v)\leqslant d_w(u,v)
\end{equation}
for all $u,v\in V(G)$.

\item[\rm(ii)] $G$ is a complete $k$-partite graph with a partition
$\{X_{\alpha} : \alpha\in\mathcal{I}\}$ such that
$\card\mathcal{I}=k\geqslant 2$ and $\card X_{\alpha}\leqslant 2$
for each part $X_{\alpha}$.
\end{itemize}
\end{theorem}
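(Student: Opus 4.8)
\textbf{Proof proposal for Theorem \ref{t40}.}

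The plan is to prove both implications by leveraging the explicit formulas \eqref{e53} and \eqref{e54} from Theorem \ref{t21}, which already give us $d_w$ and $\rho_{0,w}$ for complete $k$-partite graphs. By Theorem \ref{t21}, condition (i) forces $G$ to be complete $k$-partite with $k\geqslant 2$, so in proving (i)$\Rightarrow$(ii) the only remaining task is to show that all parts have cardinality $\leqslant 2$; and in proving (ii)$\Rightarrow$(i) we may assume $G$ is complete $k$-partite with all parts of size $\leqslant 2$ and must verify that every symmetric $f$ satisfying \eqref{e52} is actually a pseudometric (membership in $\mathfrak{M}_w$ on edges is automatic from $\rho_{0,w}\leqslant f\leqslant d_w$ together with $\rho_{0,w}=d_w=w$ on edges).

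For (i)$\Rightarrow$(ii), I would argue by contraposition: suppose some part $X_{\alpha_0}$ contains three distinct vertices $u,v,t$. Pick a metrizable weight $w$ (for instance $w\equiv 1$, or a more flexible weight if needed to separate the relevant quantities) and build a symmetric function $f$ that sits between $\rho_{0,w}$ and $d_w$ pointwise but violates the triangle inequality on the triple $u,v,t$. The three pairs $\{u,v\},\{v,t\},\{t,u\}$ are all non-edges lying in the same part, so by \eqref{e53}/\eqref{e54} the intervals $[\rho_{0,w}(\cdot,\cdot),d_w(\cdot,\cdot)]$ for these three pairs are governed by the common ``outside'' vertices $p\notin X_{\alpha_0}$; with three independent pairs one has enough freedom to choose, say, $f$ near the bottom on two of the pairs and near the top on the third, breaking $f(u,v)\leqslant f(u,t)+f(t,v)$ while keeping all values in their admissible intervals. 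This is the step I expect to be the main obstacle: one must check that the admissible intervals genuinely leave room for such a violation, i.e.\ that $d_w$ on one pair can exceed $\rho_{0,w}$ summed over the other two for a suitable metrizable $w$, which requires a careful but elementary choice of weights (a unit weight on the appropriate edges of a large-enough complete $k$-partite graph should suffice).

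For (ii)$\Rightarrow$(i), assume $G$ is complete $k$-partite with all parts of cardinality $\leqslant 2$ and $w$ metrizable. By Theorem \ref{t21} the least element $\rho_{0,w}$ exists and $d_w$ is the greatest element (Proposition \ref{p7}), so the only thing to prove is that a symmetric $f:V\times V\to\mathbb{R}^+$ with $\rho_{0,w}\leqslant f\leqslant d_w$ satisfies the triangle inequality. Fix distinct $x,y,z$. If any of the three pairs is an edge, then on that pair $f$ equals $w$ (since $\rho_{0,w}=d_w=w$ there), and the remaining cases reduce to inequalities among $w$-values and the at-most-one non-edge value; the structure of complete $k$-partite graphs with small parts means at most one of $\{x,y\},\{y,z\},\{z,x\}$ can be a non-edge when two of the vertices are forced into distinct parts, so these cases follow from the ``triangle inequality'' for the metrizable weight $w$ together with $f\leqslant d_w$ and $f\geqslant\rho_{0,w}$. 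The genuinely delicate case is when all three pairs are non-edges, which — because parts have size $\leqslant 2$ — forces $x,y,z$ to lie in the \emph{same} part (a part of size $\leqslant 2$ cannot contain three vertices, so in fact this case is vacuous!); hence the triangle inequality among three non-edges never arises. Thus only the mixed cases remain, and for those I would write out $f$ on the unique non-edge pair using $\rho_{0,w}(x,y)=\sup_p|w(\{x,p\})-w(\{p,y\})|$ and $d_w(x,y)=\inf_p(w(\{x,p\})+w(\{p,y\}))$ from \eqref{e53}/\eqref{e54}, then bound $f(x,y)$ above by $d_w(x,y)\leqslant w(\{x,z\})+w(\{z,y\})=f(x,z)+f(z,y)$ when $z$ is adjacent to both $x$ and $y$, and below by $\rho_{0,w}$ symmetrically; the cardinality constraint is exactly what guarantees such a witness vertex $z$ exists and that no three-non-edge triangle appears. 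This interplay between the size-$\leqslant 2$ hypothesis and which triangles can occur is the crux of the argument.
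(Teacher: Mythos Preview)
Your proposal is correct and follows essentially the same route as the paper. For (ii)$\Rightarrow$(i) your argument is virtually identical to the paper's: the size-$\leqslant 2$ hypothesis forces at most one non-edge among any three distinct vertices, and then the cases are dispatched exactly as you describe, using $f\leqslant d_w$ when the non-edge is $\{x,y\}$ and $f\geqslant\rho_{0,w}$ when the non-edge is one of the other two sides.

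For (i)$\Rightarrow$(ii) your strategy is the same as the paper's --- take $w\equiv 1$ and three vertices $u,v,t$ in a part of size $\geqslant 3$, then exhibit a sandwiched $f$ violating the triangle inequality --- but the paper executes the sandwich slightly differently: rather than computing $\rho_{0,w}$ and $d_w$ from \eqref{e53}--\eqref{e54}, it writes down two explicit pseudometrics $\rho_1,\rho_2\in\mathfrak{M}_w$ (the discrete metric and the ``edge-indicator'' pseudometric) and squeezes $f$ between them, which automatically gives \eqref{e52} since $\rho_{0,w}\leqslant\rho_2$ and $\rho_1\leqslant d_w$. Your direct route via the formulas works too and is arguably cleaner: with $w\equiv 1$ one gets $\rho_{0,w}=0$ and $d_w=2$ on every non-edge pair inside the part, so setting $f(u,v)=2$ and $f(u,t)=f(t,v)=0$ (and $f=1$ elsewhere off the diagonal) gives the desired violation $2>0+0$; the ``main obstacle'' you flagged is thus no obstacle at all.
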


\begin{proof}
{\bf(i)$\Rightarrow$(ii)} Suppose that (i) holds. By Theorem
\ref{t21} $G$ is a complete $k$-partite graph with $k\geqslant 2$.
Assume that there is a part $X_{\alpha_0}$ such that
$\card(X_{\alpha_0})\geqslant 3$. Let $v_1,v_2,v_3$ be some pairwise
distinct elements of $X_{\alpha_0}$ and let $w$ be the weight such
that $w(e)=1$ for each $e\in E(G)$. Define functions $\rho_1$ and
$\rho_2$ on the set $V(G)\times V(G)$ as
$$
\rho_1(u,v)=\begin{cases} 0\quad\text{if } u=v\\
1\quad\text{if } u\neq v
\end{cases}
\text{and}\quad
\rho_2(u,v)=\begin{cases} 1\quad\text{if } \{u,v\}\in E(G)\\
0\quad\text{if } \{u,v\}\notin E(G)
\end{cases}.
$$
It is clear that $\rho_1\in\mathfrak{M}_w$. To prove that
$\rho_2\in\mathfrak{M}_w$ it is sufficient to verify the triangle
inequality
\begin{equation}\label{e55}
\rho_2(u,v)\leqslant\rho_2(u,s)+\rho_2(v,s)
\end{equation}
for all $u,v,s\in V(G)$. If \eqref{e55} does not hold, then
$\rho_2(u,v)=1$ and $\rho_2(u,s)=\rho_2(v,s)=0$. Consequently we
have that
\begin{equation}\label{e56}
\{u,v\}\in E(G)\text{ and }\{u,s\}\notin E(G)\text{ and
}\{v,s\}\notin E(G).
\end{equation}

The relation $\{u,s\}\notin E(G)\text{ and }\{v,s\}\notin E(G)$
imply that $u,s$ belong to a part $X_u$, similarly $v,s$ belong to a
part $X_v$. Since $s\in X_u\cap X_v$ we obtain that $X_u=X_v$, hence
$\{u,v\}\notin E(G)$ contrary to the first membership relation in
\eqref{e56}. Thus \eqref{e55} holds for all $u,v,s\in V(G)$, so
$\rho_2\in\mathfrak{M}_w$. The function $f:V(G)\times
V(G)\to\mathbb{R}^+$ defined as
$$
f(v_1,v_2)=f(v_2,v_1)=1,\quad
f(v_1,v_3)=f(v_3,v_1)=f(v_3,v_2)=f(v_2,v_3)=0
$$
and
$$
f(u,v)=\rho_1(u,v)
$$
for $(u,v)\in(V(G)\times
V(G))\setminus\{(v_1,v_2),(v_2,v_1),(v_2,v_3),(v_3,v_2),(v_1,v_3),(v_3,v_2)\}$
satisfies the double inequality
$$
\rho_2(u,v)\leqslant f(u,v)\leqslant\rho_1(u,v)
$$
that implies \eqref{e52}. Hence by (i) we must have
$$
f(v_1,v_2)\leqslant f(v_1,v_3)+f(v_3,v_2)
$$
that contradicts the definition of the function $f$. Thus the
inequality $\card X_{\alpha}\break\leqslant 2$ holds for each part
$X_{\alpha}$.
\begin{figure}[h]
\includegraphics[width=12cm,keepaspectratio]{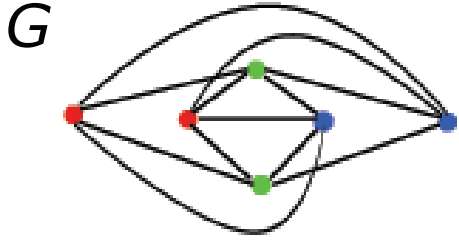}
\caption{A complete 3-partite graph $G$ satisfies condition (ii)
of Theorem~\ref{t40}.}
\end{figure}

{\bf(ii)$\Rightarrow$(i)} Suppose that condition (ii) holds. Since
$k\geqslant 2$ and  $G$ is a complete $k$-partite graph, Theorem
\ref{t21} provides the existence of the least pseudometric
$\rho_{0,w}$ for each metrizable weight $w$. Let $f:V(G)\times
V(G)\to\mathbb{R}^+$ be a symmetric function such that \eqref{e52}
holds for all $u,v\in V(G)$. The double inequality \eqref{e52}
implies that $f$ is nonnegative and $f(u,v)=w(\{u,v\})$ for all
$\{u,v\}\in E(V)$ and $f(u,u)=0$ for all $u\in V(G)$. Consequently
to prove that $f\in\mathfrak{M}_w$ it is sufficient  to obtain the
triangle inequality
\begin{equation}\label{e57}
f(u,v)\leqslant f(u,p)+f(p,v)
\end{equation}
for all $u,v,p\in V(G)$. We may assume $u,v$ and $p$ are pairwise
disjoint, otherwise \eqref{e57} is trivial. Since $\card
X_{\alpha}\leqslant 2$ for each part $X_{\alpha}$, at most one pair
from the vertices $u,v$ and $p$ are nonadjacent. If $\{u,v\}\notin
E(G)$, then using \eqref{e52} we obtain
\begin{multline*}
f(u,v)\leqslant d_w(u,v)\leqslant d_w(u,p)+d_w(p,v)\\=
w(\{u,p\})+w(\{p,v\})\leqslant f(u,p)+f(p,v).
\end{multline*}
Similarly if $\{u,p\}\notin E(G)$ or $\{p,v\}\notin E(G)$, then we
have
$$
f(u,v)\leqslant \rho_{0,w}(u,v)\leqslant
\rho_{0,w}(u,p)+\rho_{0,w}(p,v)= f(u,p)+f(p,v).
$$
Inequality \eqref{e57} follows and we obtain condition (i).
\end{proof}

\medskip{\bf Acknowledgment.} The first author is thankful to the 
Finnish Academy of Science and Letters for the support. 
The research of M. Vuorinen was supported, 
in part, by the Academy of Finland, Project 2600066611.

%\newpage

\vskip8mm

{\bf Oleksiy Dovgoshey}

{\small  Institute of Applied Mathematics and Mechanics of NASU,}

{\small  R.Luxemburg str. 74, Donetsk 83114, Ukraine;}

{\small E-mail: {\it aleksdov@mail.ru}

 \vskip8mm

{\bf Olli Martio}

{\small  Department of Mathematics and Statistics,}

{\small  University of Helsinki,}

{\small  P.O. Box 68 FI-00014 University of Helsinki, Finland;}

{\small E-mail: {\it olli.martio@helsinki.fi}}

\vskip8mm

{\bf Matti Vuorinen}

{\small  Department of Mathematics,}

{\small  University of Turku,}

{\small  FIN-20014, Turku, Finland;}

{\small E-mail: {\it vuorinen@utu.fi}}

\end{document}